\documentclass[12pt,twoside]{amsart}

\usepackage[OT1]{fontenc}
\usepackage[left=3.6cm,top=3.8cm,right=3.6cm]{geometry}
\usepackage{comment}
\usepackage[utf8]{inputenc}
\usepackage{enumerate}
\usepackage{amsthm,amsmath,amssymb,mathrsfs}

\usepackage{verbatim} 
\usepackage{esint}

\numberwithin{equation}{section}

\DeclareMathOperator{\bdim}{dim_B}
\DeclareMathOperator{\lbdim}{\underline{\dim}_B}

\usepackage{xcolor}
\usepackage[pagebackref]{hyperref}
\hypersetup{
   colorlinks,
    linkcolor={red!60!black},
    citecolor={blue!60!black},
    urlcolor={blue!90!black}
}

\numberwithin{equation}{section}

\theoremstyle{plain}

\newtheorem{rmk}[equation]{Remark}


\newtheorem{thm}{Theorem}[section]
\newtheorem{lemma}[thm]{Lemma}
\newtheorem{prop}[thm]{Proposition}
\newtheorem{cor}[thm]{Corollary}

\theoremstyle{plain}

\newtheorem{defn}[thm]{Definition}

\newcommand{\e}{\varepsilon}

\newcommand{\rr}{\mathbb{R}}

\newcommand{\R}{\mathbb{R}}
\newcommand{\Z}{\mathbb{Z}}

\newcommand{\eps}{\varepsilon}
\newcommand{\norm}[1]{\left\lVert #1 \right\rVert}
\newcommand{\1}{\mathbf{1}}

\title{Maximal Operators for cube skeletons}

\author{Andrea Olivo and Pablo Shmerkin}
\address
{ Departamento de Matem\'atica, Facultad de Ciencias Exactas y Naturales, Universidad de  Buenos Aires and IMAS-CONICET, Ciudad Universitaria, Pabell\'on I (C1428EGA), Ciudad de Buenos Aires, Argentina}
\email{aolivo@dm.uba.ar}
\thanks{A.O. was partially supported by Project  CONICET-PIP 11220150100355}

\address{Department of Mathematics and Statistics, Torcuato Di Tella University, and CONICET, Buenos Aires, Argentina}
\email{pshmerkin@utdt.edu}
\urladdr{http://www.utdt.edu/profesores/pshmerkin}
\thanks{P.S. was partially supported by Projects CONICET-PIP 11220150100355 and PICT 2015-3675 (ANPCyT)}

\subjclass[2010]{Primary: 28A75, 28A80, 42B25}
\keywords{averages over skeletons, maximal functions}

\begin{document}

\maketitle

\begin{abstract}
We study discretized maximal operators associated to averaging over (neighborhoods of) squares in the plane and, more generally, $k$-skeletons in $\mathbb{R}^n$. Although these operators are known not to be bounded on any $L^p$, we obtain nearly sharp $L^p$ bounds for every small discretization scale. These results are motivated by, and partially extend, recent results of T. Keleti, D. Nagy and P. Shmerkin, and of R. Thornton, on sets that contain a scaled $k$-sekeleton of the unit cube with center in every point of $\mathbb{R}^n$.
\end{abstract}


\section{Introduction and main results}

\subsection{Introduction}

One of the most basic operators in analysis is the (centered) Hardy-Littlewood maximal operator in $\rr^n$, defined as
 \[
 \mathcal{M}f(x)=  \sup\limits_{ r>0} \frac{1}{\mathcal{L}_{n}(B(x,r))}\int _ {B(x,r)} |f(y)| \, d\mathcal{L}_{n},
 \]
for all functions $f \in L^1_{\rm loc}$, where $\mathcal{L}_{n}$ is the $n$-dimensional Lebesgue measure. One of the most classical results in Analysis establishes that $\mathcal{M}$ is bounded in $L^p$ for all $p \in (1,\infty]$. It is natural to consider similar operators, and study their boundedness properties, where Lebesgue measure is replaced by some other measure. In this direction, E. Stein \cite{Stein76} (for $n\ge 3$) and J. Bourgain \cite{Bourgain86} (for $n=2$) studied the case of averages over spheres. More precisely, let $\mathcal{H}^{s}$ be the $s$-dimensional Hausdorff measure, denote the $(n-1)$-sphere with center $x$ and radius $r$ by $S^{n-1}(x,r)$ and define
\[
\mathcal{M}_{\text{sph}}f(x)=  \sup\limits_{ r>0} \frac{1}{\mathcal{H}^{n-1}(S^{n-1}(x,r))}\int _ {S^{n-1}(x,r)} |f(y)| \,d\mathcal{H}^{n-1},
\]
for $f \in C(\rr^{n})$.  Stein proved that $\mathcal{M}_{\text{sph}}$ is bounded in $L^p$ for $p > n/n-1$ if $n \geq 3$ and, ten years later, Bourgain proved the (more difficult) case $n=2$; both ranges are sharp, as seen from the indicators of small spherical caps. A simple consequence of the Stein-Bourgain Theorem is that if a set $E\subset \rr^n$ contains a sphere with center in every point  of $\rr^n$, then $\mathcal{L}_{n}(E) > 0$. In the plane, this result was proved independently by Marstrand \cite{Marstrand87}, without using maximal operators. T. Wolff \cite{Wolff00} studied a more general problem where the set of centers is an arbitrary set (instead of all of $\rr^n$). More recently, I. {\L}aba and M. Pramanik \cite{LabaPramanik11} have considered a similar problem in the line, with circles replaced by certain Cantor sets. In particular, they proved that sets of fractional dimension can differentiate some $L^p$ spaces.

A similar geometric problem, in which circles are replaced by $k$-skeletons of $n$-cubes with axes-parallel sides was recently studied by T. Keleti, D. Nagy and the second author in \cite{KNS18}, for the case  $n=2$, and by R. Thornton in \cite{Thornton17} for $n\geq 3$. In \cite{KNS18} it was shown that a set  in the plane containing a $1$-skeleton with center in every point of $[0,1]^2$ can have Lebesgue measure $0$, and it was investigated how small its fractal dimension can be for different notions of dimension. The arguments from \cite{KNS18, Thornton17} are direct and do not involve any maximal operators. The goal of this paper is to study a natural $k$-skeleton maximal operator associated to this geometric problem. As pointed out in \cite{KNS18}, the most direct generalization of the Stein-Bourgain maximal operator is not of interest, so we consider a more suitable variant that, however, is not sublinear. Also, such operator cannot be bounded from $L^p$ to $L^q$ for any finite $p$, for otherwise a set with a $k$-skeleton centered at every point would have positive measure. We study, then, discretized versions of the operator, and prove nearly sharp $L^p$ bounds for them. As a corollary, we recover one of the dimension bounds from \cite{KNS18, Thornton17}.


\subsection{Definitions and notation}

Throughout this paper, an $n$-cube will always mean an $n$-dimensional cube  with all sides parallel to the axes (unless otherwise specified), that is, a set of the form
\[
\prod_{i=1}^{n} [x_i-r,x_i+r] = x + [-r,r]^n
\]
for some $x=(x_1,\ldots,x_n)\in\R^n$, $r>0$.

The expression ${n \brack k}$ stands for all $k$-element subsets of $\{1,\ldots,n\}$. For $x \in \rr^n$ and $I \in  {n \brack k}$, $x_{I}$ is the vector in $\rr^k$ formed by taking the entries of $x$ indexed by $I$.
 The $k$-skeleton of an $n$-cube $x + [-r,r]^n$ is the set $x + \bigcup_{I \in{n \brack k}} \prod_{i=1}^{n} A_{I,i}$ where $A_{I,i}=[-r,r]$ if $i \in I$ and $\{-r,r\}$ otherwise.

We denote the cardinality of $E$ by $|E|$. If $\delta >0$, then
\[
E_\delta :=\{ x \in \rr^n : d(E, x) < \delta \}
\]
denotes the open $\delta$-neighborhood of $E$, where from now on $d$ denotes the distance induced by the infinity norm.

We denote by $S_{k}(x,r)$ the $k$-skeleton of an $n$-cube with center $x$ and side length $2r$. The faces of $S_k(x,r)$ are enumerated as $S^{j}_{k}(x,r)$, $j=1,\ldots, {n \choose k}2^{n-k}$ (we recall that if $k=0$ then faces correspond to vertices, if $k=1$ to edges, etc). It is easy to see that an $n$-cube has $N(n,k)= {n \choose k}2^{n-k}$ $k$-faces. In what follows, we denote  $N=N(n,k)$ whenever $k$ and $n$ are clear from context. We also write $S_{k,\delta}(x,r):=  (S_k(x,r))_\delta$ for simplicity. Likewise, by $S^j_{k,\delta}(x,r)$ we denote the respective $\delta$-neighborhood of $S^{j}_{k}(x,r)$. Observe that
\begin{equation} \label{eq:measure-nbhd-face}
\mathcal{L}_n(S^j_{k,\delta}(x,r))= 2^{n}(r^k\delta^{n-k} + \delta^n) \leq 2^{n+1}r^k\delta^{n-k}.
\end{equation}

We denote positive constants by $C$, indicating any parameters they may depend on by subindices. Their values may change from line to line. For example, $C_{n,k}$ denotes a positive function of $n$ and $k$.


\subsection{$k$-skeleton maximal function}

Fix $0\le k<n$ and $0<\delta <1$. A first attempt at a definition for the $k$-skeleton maximal function  might be the following:
\[
\mathbf{M}^{k}_{\delta}f(x) = \sup_{r>0} \displaystyle \frac{1}{\mathcal{L}(S_{k,\delta}(x,r))} \int _ {S_{k,\delta}(x,r)} |f(y)| \,dy,
\]
However, if we take $f$ to be the indicator of a small neighborhood of $V=[-1,2]^k \times \{0\}^{n-k}$, then we see that $\mathbf{M}^k_{\delta} f(x)\geq 1$ for all $x \in [0,1]^n$ (just choose $r$ such that the the cube centered at $x$ has one of these $k$-faces inside $V$). In other words, we obtain a constant lower bound for a function with small norm from just one $k$-face of the full skeleton. This leads to unnatural (and trivial) results.

Following \cite[\S 7]{KNS18}, we propose the following as a more interesting maximal operator:
\begin{equation}\label{unrestrictedmaximal}
\mathbf{M}^{k}_{\delta}f(x) = \sup_{r>0} \min\limits_{j=1}^{N} \displaystyle\frac{1}{\mathcal{L}_n(S_{k,\delta}^{j}(x,r))} \int _ {S_{k,\delta}^{j}(x,r)} |f(y)|\, dy,
\end{equation}
This maximal operator takes into consideration all $k$-faces of an $n$-cube, but unlike most other kinds of maximal operators, is not sub-linear. In this article we focus on the restricted version of this operator, in which the side lengths are bounded away from $0$ and $\infty$:
\begin{defn} The \textit{$k$-skeleton maximal operator} with width $\delta$ is given, for each $f \in L^{1}_{\rm loc}(\rr^n)$,
by
\begin{equation*}
M^{k}_{\delta}f(x) = \sup\limits_{1 \leq r \leq2} \min\limits_{j=1}^{N}  \displaystyle\frac{1}{\mathcal{L}_n(S_{k,\delta}^{j}(x,r))} \int _ {S_{k,\delta}^{j}(x,r)} |f(y)|\, dy.
\end{equation*}

\end{defn}

See also Section \ref{sec:extensions} for an unrestricted variant. It is easy to deduce from the Hardy-Littlewood Theorem that the $M^{k}_\delta$ are bounded in $L^p$ for $p\geq 1$ (with a bound depending on $\delta$). The problem we investigate is the rate at which $\|M^{k}_\delta\|_{L^p\to L^p}$ increases as $\delta\downarrow 0$, where as usual
\[
\norm{M^k_\delta}_{L^p\to L^q}=\sup\limits_{f\neq 0} \frac{\norm{M^k_{\delta}f}_{L^q}}{\norm{f}_{L^p}}.
\]
Note that, since $M^k_{\delta}(\lambda f)= \lambda M^k_{\delta}(f)$, for $\lambda \in \rr$, it is enough to consider functions $f$ such that $\norm{f}_{L^p}=1$. The problem of finding bounds for $\norm{M^k_\delta}_{L^p\to L^q}$ for $p\neq q$ will be addressed in a forthcoming paper.

Our main result is the following:
\begin{thm} \label{thm:main-theorem}
Given $0\le k< n$, $1\le p <\infty$ and $\eps>0$, there exist  positive constants $C'(n,k,\eps), C(n,k)$ such that
\begin{equation*}
C'(n,k,\eps)\cdot \delta^{\frac{k-n}{2np}+\eps} \leq \norm{M^{k}_{\delta}}_{L^p\to L^p} \leq C(n,k)\cdot \delta^{\frac{k-n}{2np}}.
\end{equation*}
for all $\delta\in (0,1)$.
\end{thm}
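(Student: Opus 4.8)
The plan is to establish the two bounds separately, with the upper bound being the more substantial part.

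\medskip

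\emph{Lower bound.} I would construct a test function supported near a single $k$-plane and exploit the fact that one $k$-face of a skeleton centered at any point of $[0,1]^n$ can be made to lie in (a thickening of) that plane. Concretely, take $f = \mathbf{1}_{V_\rho}$ where $V = [-1,2]^k \times \{0\}^{n-k}$ and $\rho$ is a parameter to be chosen, so that $\norm{f}_{L^p} \approx \rho^{(n-k)/p}$. For a suitable choice of $r \in [1,2]$ depending on $x$, the face $S^j_{k,\delta}(x,r)$ corresponding to the ``bottom'' face lies inside $V_\rho$ once $\delta \lesssim \rho$; but of course this only controls \emph{one} of the $N$ faces, and $M^k_\delta$ takes a minimum over all $N$. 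The point is that the \emph{other} faces of that same cube have average at least of order $\delta^{n-k}/\rho^{n-k}$ wait --- more carefully, the remaining faces lie in horizontal slabs of thickness $\rho$ only on a set of $r$'s of measure $\approx \rho$, so by integrating over $r$ one gains a factor. The cleanest route is: fix $x$, integrate the inner quantity over $r \in [1,2]$ to see that for a positive-measure set of $x$ one has $M^k_\delta f(x) \gtrsim (\delta/\rho)^{n-k} \cdot$(correction), then optimize $\rho$ against the constraint coming from $\norm{f}_{L^p}$. Balancing $\delta^{(k-n)/p}\rho^{(n-k)} \cdot \rho^{-(n-k)/p}$-type expressions against $\delta^{(k-n)/(2np)}$ pins down $\rho \approx \delta^{1 - 1/(2n)}$ (up to the $\eps$ loss, which absorbs logarithmic factors and the slack in the positive-measure-of-$x$ estimate). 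This is essentially the construction from \cite{KNS18, Thornton17} reinterpreted as a lower bound for the operator norm.

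\medskip

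\emph{Upper bound.} Here the natural approach is a covering/discretization argument. By homogeneity assume $\norm{f}_{L^p} = 1$. For $x$ with $M^k_\delta f(x) > \lambda$, there is $r = r(x) \in [1,2]$ so that \emph{every} face $S^j_{k,\delta}(x,r)$ has average exceeding $\lambda$. Cover $[1,2]$ by $\approx \delta^{-1}$ intervals of length $\delta$; on each such interval the skeletons are comparable, so we reduce to a fixed radius $r$ and must bound the measure of the set $E_{\lambda,r}$ of centers $x$ for which all $N$ faces of $S_{k,\delta}(x,r)$ have average $> \lambda$. Each $k$-face average is an average of $f$ over a $\delta$-tube of dimensions $\sim 1 \times \cdots \times 1 \times \delta \times \cdots \times \delta$ (with $k$ long directions and $n-k$ short ones), whose measure is $\approx \delta^{n-k}$ by \eqref{eq:measure-nbhd-face}. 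The key geometric input is that, after translating, each face $S^j_k(x,r)$ has its ``short'' coordinates determined by a fixed subset of the coordinates of $x$ together with $\pm r$; so the condition that all faces simultaneously have large average forces $f$ to be large on a whole \emph{family} of transversal tubes, and a Fubini/slicing argument over the complementary coordinates lets one convert the pointwise information into an $L^1$ (hence via $L^p$) bound. Summing the contributions of the $\delta^{-1}$ radii and using Chebyshev to pass from a level-set estimate to the $L^p\to L^p$ norm yields the exponent $\delta^{(k-n)/(2np)}$.

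\medskip

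\emph{Main obstacle.} The crux --- and the reason the exponent has the particular form $\frac{k-n}{2np}$ with a factor $2n$ rather than something simpler --- is the upper bound's geometric combinatorics: one must show that requiring \emph{all} $N$ faces (not just one) to carry mass $>\lambda$ is genuinely restrictive, quantitatively. A single tube of measure $\delta^{n-k}$ carrying average $\lambda$ only says $\|f\|_{L^1}$ on that tube is $\gtrsim \lambda \delta^{n-k}$, which summed naively over all centers gives nothing. The gain must come from the fact that opposite faces are at distance $\approx 2r \geq 2$ apart and from the transversality of the $k$ coordinate directions involved, so that the tubes through a typical point are ``spread out'' and the overlap is controlled --- this is where the square-root-type saving (the $2$ in $2np$, essentially an $L^2$ orthogonality or a Cauchy--Schwarz step between the ``long'' and ``short'' coordinate groups) enters. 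I expect that making this transversality estimate sharp, and checking it degrades by no worse than $\delta^\eps$, will be the technically delicate part; the reduction to fixed radius and the passage from level sets to $L^p$ norms are routine.
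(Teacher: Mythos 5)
Your high-level reduction (discretize the radius, pick one face per skeleton, convert a pointwise bound into an $L^p$ bound) points in a reasonable direction, but both halves of your argument have a genuine gap precisely where you flag uncertainty, and the gap is not filled by the mechanism you propose.

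For the upper bound, your instinct that ``requiring all $N$ faces to carry mass is genuinely restrictive'' is exactly right, but the saving does \emph{not} come from $L^2$ orthogonality, Cauchy--Schwarz between long and short coordinate groups, or a Fubini/slicing argument over the complementary coordinates. These are too weak: with only transversality of the tubes one would get exponents like $(k-n)/p$ or variants, not the $2n$ in the denominator. The actual engine is a nontrivial additive-combinatorial incidence bound, Thornton's $(n,\ell)$-Dimensional Main Lemma: if a skeleton centered at each $x \in X$ has all of its $(n-k)$-dimensional ``corner projections'' landing in a set $A$, then $|A| \gtrsim |X|^{(n-k)(2n-1)/(2n^2)}$. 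This is repackaged in the paper as a face-selection lemma (Lemma~\ref{chooseelement}): one may pick one $k$-face from each of $u$ skeletons so that no coordinate $k$-plane contains more than $C_{n,k}\, u^{1-(n-k)(2n-1)/(2n^2)}$ of the chosen faces. That bound, fed into a duality reduction (estimate $\|\sum_i t_i \mathbf 1_{\ell^i_{k,\delta}}\|_{L^m}$ by expanding the $m$-fold product and using H\"older plus the incidence bound), gives the $L^1$ estimate $\delta^{(k-n)/2n}$, and Riesz--Thorin against the trivial $L^\infty$ bound gives the $L^p$ exponent. Your Chebyshev/level-set scheme, with a union over $\approx\delta^{-1}$ radii, also risks an extra $\delta^{-1}$ loss; the paper avoids this by linearizing (choosing a single radius function $\rho$ per $f$) before any level-set manipulation, not after.

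For the lower bound, the test function $f = \mathbf 1_{V_\rho}$ supported in a thickened $k$-plane cannot give the stated rate: since $M^k_\delta$ takes a minimum over \emph{all} faces, the other $N-1$ faces of any candidate cube see almost none of $V_\rho$, and your integral-over-$r$ heuristic would need to produce a sharp quantitative saving on the measure of good $(x,r)$ which you have not established (and which, with this single-slab $f$, is not there). The correct input is the full Keleti--Nagy--Shmerkin/Thornton construction of a compact set $B$ that contains the \emph{entire} $k$-skeleton of a cube around every point of $[0,1]^n$ and has $\bdim(B) = k + \frac{(n-k)(2n-1)}{2n}$; then $f = \mathbf 1_{\tilde B_\delta}$ (after pigeonholing onto a radius range $[1,2]$ via rescaling) satisfies $M^k_\delta f \geq 1$ on a set of positive measure while $\|f\|_{L^p}^p \lesssim_\eps \delta^{(n-k)/2n - p\eps}$. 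You cite the right source, but the sketch you offer is not a reconstruction of that construction, and without it the balancing of $\rho$ against $\delta$ that you describe has nothing to stand on.
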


We deduce the lower bound from a construction from \cite{KNS18,Thornton17}. For the proof of the upper bound, we follow the standard strategy of reducing the problem to one of geometric intersections via a procedure that involves discretization, linearization and duality. However, the fact that we are taking the minimum over all $k$-faces in the definition of $M^{k}_{\delta}$ requires us to introduce several variants in the argument. Namely, we apply a combinatorial lemma from \cite{KNS18, Thornton17} to fix one face in each $k$-skeleton in such a a way that once we come to the problem of estimating intersections we are able to get the correct bound. This extra layer in the argument requires us to take extra care in the order in which each step is applied.

\section{Proof of Theorem \ref{thm:main-theorem}}

\subsection{The lower bound}

We start by proving the lower bound in Theorem \ref{thm:main-theorem}:
\begin{prop} \label{prop:lowerbound} For any $p\in (1,\infty)$ we have the estimate
\[
\norm{M^k_{\delta}}_{L^p\to L^{p}} \ge  C'(n,k,\eps)\cdot \delta^{\frac{k-n}{2np}+\eps}.
\]
\end{prop}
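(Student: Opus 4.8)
The plan is to exhibit, for each small $\delta$, a function $f$ with $\|f\|_{L^p}$ controlled such that $M^k_\delta f$ is bounded below on a set of substantial measure, and to extract the function from the extremal constructions in \cite{KNS18, Thornton17}. Those papers build a set $E = E_\delta \subset \rr^n$ which contains the $k$-skeleton of some $n$-cube with $1\le r\le 2$ centered at every point of (say) $[0,1]^n$, and whose Lebesgue measure is as small as the method allows; the key quantitative output is a bound of the form $\mathcal{L}_n(E_\delta) \lesssim_{n,k,\eps} \delta^{\frac{n-k}{n} - \eps}$ on the measure of a $\delta$-neighborhood realizing all these skeletons. I would take $f = \mathbf{1}_{E_\delta}$ (or a suitable $\delta$-neighborhood thereof), so that $\|f\|_{L^p}^p = \mathcal{L}_n(E_\delta) \lesssim \delta^{\frac{n-k}{n}-\eps}$, hence $\|f\|_{L^p} \lesssim \delta^{\frac{n-k}{np}-\eps'}$.

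Next I would check the lower bound on $M^k_\delta f$. Since for every $x\in [0,1]^n$ there is some $r=r(x)\in[1,2]$ with $S_k(x,r(x)) \subset E_\delta$, every face satisfies $S^j_{k,\delta}(x,r(x)) \subset E_{2\delta}$ (enlarging the neighborhood slightly), so
\[
\frac{1}{\mathcal{L}_n(S^j_{k,\delta}(x,r(x)))}\int_{S^j_{k,\delta}(x,r(x))} |f| \gtrsim 1
\]
for each $j$, provided $f$ is taken to be the indicator of $E_{2\delta}$ rather than $E_\delta$ (a harmless adjustment that only changes the measure by a constant factor, since $E$ is already a $\delta$-neighborhood of a skeleton-union). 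Taking the min over $j$ and the sup over $r$ gives $M^k_\delta f(x) \gtrsim 1$ for all $x\in[0,1]^n$, so $\|M^k_\delta f\|_{L^p} \gtrsim 1$. Combining,
\[
\|M^k_\delta\|_{L^p\to L^p} \ge \frac{\|M^k_\delta f\|_{L^p}}{\|f\|_{L^p}} \gtrsim \delta^{-\frac{n-k}{np}+\eps'} = \delta^{\frac{k-n}{np}+\eps'},
\]
which is off from the claimed exponent $\frac{k-n}{2np}$ by a factor of two in the denominator. To get the sharp $\frac{k-n}{2np}$ one must use a more efficient construction: rather than a single set realizing all centers, one uses the construction that works at the level of a single ``direction'' of the skeleton (or a random/product construction) which has measure $\sim \delta^{\frac{n-k}{2n}-\eps}$; this is exactly the construction underlying the sharp dimension bound $\frac{n+k}{2}$ (equivalently codimension $\frac{n-k}{2}$) in \cite{KNS18, Thornton17}. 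I would therefore recall that refined construction, set $f$ to be its indicator, and repeat the averaging estimate, which yields the exponent $\frac{k-n}{2np}+\eps$.

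The main obstacle is not any hard estimate but rather correctly importing and quoting the construction from \cite{KNS18, Thornton17}: one must identify which of their constructions gives a $\delta$-discretized set of measure $\delta^{\frac{n-k}{2n}-\eps}$ containing a scaled $k$-skeleton centered at every point of a fixed cube (as opposed to the weaker codimension-$\frac{n-k}{n}$ examples), and verify that the scales $r$ can be taken in $[1,2]$ so that the restricted operator $M^k_\delta$, not just the unrestricted $\mathbf{M}^k_\delta$, sees the lower bound. A secondary point is the passage from an indicator of a set to genuine $L^p$ testing: since $M^k_\delta$ is $1$-homogeneous but not sublinear, one cannot decompose $f$, so it is essential that a single indicator function does the job — which it does, by the averaging computation above. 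Once the construction is cited correctly, the remainder is the short computation sketched here.
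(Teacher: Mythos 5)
Your overall strategy --- test $M^k_\delta$ on the indicator of a $\delta$-neighborhood of an extremal set from \cite{KNS18,Thornton17} and compare $\|M^k_\delta f\|_p\gtrsim 1$ against $\|f\|_p=\mathcal{L}_n(\cdot)^{1/p}$ --- is exactly the paper's. But two steps need repair. The dimension bookkeeping is internally inconsistent and partly backwards. Your opening measure bound $\mathcal{L}_n(E_\delta)\lesssim\delta^{(n-k)/n-\eps}$ cannot hold: since $k<n$ and $\delta<1$, the resulting lower bound $\delta^{(k-n)/(np)}$ would be \emph{larger} than $\delta^{(k-n)/(2np)}$, i.e.\ a \emph{stronger} statement, contradicting the matching upper bound in Theorem \ref{thm:main-theorem}. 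So the issue is not that you need a ``more efficient'' (smaller) set; it is that no set of codimension $(n-k)/n$ with the required skeleton property exists, precisely because of the dimension bounds in \cite{KNS18,Thornton17}. Your second quoted measure $\delta^{(n-k)/(2n)-\eps}$ is correct, but the accompanying identification ``dimension $\tfrac{n+k}{2}$, codimension $\tfrac{n-k}{2}$'' is off by a factor of $n$: \cite[Theorem 5.3]{Thornton17} gives $\bdim(B)=k+\tfrac{(n-k)(2n-1)}{2n}=n-\tfrac{n-k}{2n}$, so codimension $\tfrac{n-k}{2n}$, which is what actually produces $\mathcal{L}_n(B_\delta)\lesssim\delta^{(n-k)/(2n)-\eps}$ via \cite[Proposition 3.2]{Falconer03}; codimension $(n-k)/2$ would give $\delta^{(n-k)/2}$ instead.

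The side-length restriction $r\in[1,2]$ is a genuine gap, not a secondary check you can defer. The set $B$ from \cite[Theorem 5.3]{Thornton17} contains a $k$-skeleton around every point of $[0,1]^n$, but of \emph{unspecified} side length, so for $x\in[0,1]^n$ you cannot assert $M^k_\delta\mathbf{1}_{B_\delta}(x)\gtrsim 1$ using the restricted supremum over $r\in[1,2]$. The paper repairs this by pigeonholing and rescaling: setting $h(x)$ to be the side length of the skeleton centered at $x$ and $H_m=h^{-1}(2^m,2^{m+1}]$, some $H_m$ has positive measure (here one needs measurability of $h$, which follows from the explicit construction); the dilation $g(x)=2^{-m}x$ moves those side lengths into $(1,2]$ while preserving box dimension, hence the codimension exponent, and one then tests against the indicator of $g(B)_\delta$ on a rescaled set of positive measure. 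Without this step your lower bound on $\|M^k_\delta f\|_p$ is not established. With both fixes the argument coincides with the paper's proof.
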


\begin{proof}
By \cite[Theorem 5.3]{Thornton17}, there exists a compact set $B \subset \rr^n$ containing the $k$-skeleton of an $n$-cube around every point of $ [0,1]^n$, such that $\bdim(B)= k + \frac{(n-k)(2n-1)}{2n}$.

We cannot directly  apply our operator to the indicator function of a neighborhood of $B$ because we do not know that the side lengths are between $1$ and $2$, but this is easily dealt with by pigeonholing and rescaling. Let $h:[0,1]^n \rightarrow (0,+\infty)$ be the function defined as $h(x)=r_x$ if $x$ is the center of $S_k(x,r_x) \subset B$
 and define the sets $H_m:=h^{-1}(2^m,2^{m+1}], m\in\Z $.
 Following the construction of $B$ (see \cite[Theorem 5.3]{Thornton17}), it is easy to see that $h$ is a measurable function. Since $[0,1]^n$ is the disjoint union of the sets $H_m$, there exists $m$ such that $\mathcal{L}(H_m)>0$; we work with this $m$ for the rest of the proof.

Consider the bi-Lipschitz function $g:\rr^n \rightarrow \rr^n$ defined as $g(x)=2^{-m}x$, then $\tilde{B}:=g(B)$ contains $k$-skeletons with center in every point of $H_m$ and side length $1<r \leq 2$. Moreover, by the bi-Lipschitz stability of box dimension (see \cite [Chapter 3.1]{Falconer03}), $\bdim\tilde{B}=\bdim{B}$.

Let $f$ be the indicator function of  $\tilde{B}_{\delta}$. Then $M^{k}_{\delta}f(x)\geq 1$ for all $x \in H_m$, so that $\mathcal{L}(H_m)^{1/p} \le \norm{M^{k}_{\delta}f}_{L^p}$. On the other hand, $\norm{f}_{L^p}=\mathcal{L}(\tilde{B}_{\delta})^{1/p}$. By the definition of box dimension (see \cite[Proposition 3.2]{Falconer03}) we have $\mathcal{L}(\tilde{B}_{\delta})\le C''_{H_m,\eps} \delta^{(n-k)/2n-p\eps}$ and this yields the claim.
\end{proof}


\subsection{Discretization and linearization}

In this section we introduce a suitable linearization of the operator $M^{k}_\delta$. As remarked in the introduction, this requires additional work compared to similar problems in the literature.

We denote the half-open unit cube by $Q_0$, i.e. $Q_0=[0,1)^n$. For $0<\delta<1$, we define $Q^*_0:= Q_0 \cap \delta\Z^n$. From now on we assume that $1/\delta$ is an integer. Note that it is enough to prove the upper bound in Theorem \ref{thm:main-theorem} in this case.

Our next goal is to extract, given a finite set of $k$-skeletons, one face from each skeleton in such a way that the overlaps are controlled. For this, the next result plays a crucial role.
\begin{thm}\cite[$(n,\ell)$-Dimensional Main Lemma (Theorem 2.7)]{Thornton17}
  If $A\subseteq \rr^{\ell}, X\subseteq \rr^n$ are any finite sets such that
\[
\forall x \in X \, \, \exists r \in \rr^+\, \forall I \in {n \brack \ell} \forall \sigma \in \{-1,1\}^{\ell} : x_{I} + r\sigma \in A,
\]
then $|A| \geq c_{n,\ell} |X|^{\ell(2n-1)/(2n^2)}$, where $c_{n,\ell}>0$ depends only on $n,\ell$ and not on the sets $A,X$.
\end{thm}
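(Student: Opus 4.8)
The plan is to strip the statement down, by two routine reductions, to a purely combinatorial incidence problem about a single finite set of reals, and then to extract the exponent $\ell(2n-1)/(2n^2)$ from the interplay between the multilinear structure of the configuration and the additive structure of $A$. First I would reduce to $\ell=1$: given $A\subseteq\R^\ell$ satisfying the hypothesis, project $A$ onto the $\ell-1$ coordinates $\ne j$ to get $A^{(j)}\subseteq\R^{\ell-1}$; since any $(\ell-1)$-subset $I'\subseteq[n]$ and sign vector $\sigma'$ can be completed to an $\ell$-subset $I$ and a sign vector $\sigma$ (using that $n>\ell-1$), the pair $(A^{(j)},X)$ again satisfies the hypothesis with $\ell$ replaced by $\ell-1$, so an induction on $\ell$ (keeping track of the fibers of $A\to A^{(j)}$ to recover the missing power) reduces matters to $\ell=1$. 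Next I would reduce to comparable radii: by dyadic pigeonholing on the finitely many values $r_x$, and then rescaling, I may assume $r_x\in[1,2)$ for every $x\in X$. The remaining statement is: if $A\subseteq\R$ is finite and each $x\in X\subseteq\R^n$ admits $r_x\in[1,2)$ with $x_i\pm r_x\in A$ for all $i$, then $|X|\le C_n\,|A|^{2n^2/(2n-1)}$.

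For the combinatorial core, the first observation is that each $x$ produces $n$ pairs $\{x_i-r_x,\,x_i+r_x\}\subseteq A$ all having the common gap $2r_x$, and that the encoding $x\mapsto(x_1-r_x,x_1+r_x,\dots,x_n-r_x,x_n+r_x)$ injects $X$ into the ``shared-difference'' subvariety $\{(a_1,b_1,\dots,a_n,b_n)\in A^{2n}:b_1-a_1=\dots=b_n-a_n>0\}$, whose cardinality is $\sum_{d}r_A(d)^n\le|A|^{n-1}\sum_d r_A(d)\le|A|^{n+1}$. This easy bound must be improved by exploiting that a given \emph{centre} is counted only once, so that a centre admitting many valid radii is ``wasting'' multiplicity in the above count. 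Accordingly I would fix a threshold $M$ and split $X$ into centres with at most $M$ valid radii and centres with more than $M$: for the latter, each coordinate $x_i$ is a centre of symmetry of a subset of $A$ of size $>2M$, so by the additive-energy bound $E(A)\le|A|^3$ there are at most $|A|^3/(4M^2)$ possibilities for each coordinate, bounding that part of $X$ by $(|A|^3/4M^2)^n$; for the former, a multilinear slicing estimate — peeling off one coordinate at a time, which replaces $A$ by sets of the form $A\cap(A+t)$ — together with the restriction on the number of radii controls the contribution, and optimizing $M$ is designed to land on $|A|^{2n^2/(2n-1)}$.

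The main obstacle is precisely getting these two halves to combine to the \emph{sharp} exponent. The naive forms of both estimates individually only yield $|X|\lesssim|A|^{n+1}$, which is strictly weaker than the target $|A|^{\,n+n/(2n-1)}$; the denominator $2n^2$ (rather than $2n$) is forced by using the constraint that a \emph{single} radius is shared across all $n$ coordinates simultaneously, and I expect the honest argument to require treating the radius multiplicity and the multilinear (Loomis--Whitney/Brascamp--Lieb type) structure jointly rather than in two separate steps — this is the content of Thornton's proof of this lemma, which I would follow at this point.
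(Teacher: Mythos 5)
This theorem is quoted from Thornton's paper (Theorem 2.7 there) and is not proved in the present paper, so there is no internal argument to compare against; the question is whether your sketch would yield the result, and it would not. The reduction to $\ell=1$ by projecting $A$ onto $\ell-1$ coordinates is not sound as written: the induction hypothesis applied to $A^{(j)}\subseteq\R^{\ell-1}$ gives only $|A|\ge|A^{(j)}|\ge c|X|^{(\ell-1)(2n-1)/(2n^2)}$, and there is no mechanism behind the parenthetical ``keeping track of the fibers'' to supply the missing factor $|X|^{(2n-1)/(2n^2)}$ --- the fibers of $A\to A^{(j)}$ may all be singletons, in which case $|A|=|A^{(j)}|$ and nothing is recovered. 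Because the target exponent $\ell(2n-1)/(2n^2)$ is linear in $\ell$, a projection that simply discards one coordinate cannot account for the increment on its own; one would need to show that small fibers force $|A^{(j)}|$ to exceed the inductive bound, and nothing in the hypothesis obviously enforces that. (There is also a smaller issue with the ``completion'' step: to realize $x_{I'}+r\sigma'$ as the $j$-omitted projection of $x_I+r\sigma$ you need the inserted index to sit in the $j$-th position of the sorted $I$, which is not always possible for a fixed $j$.) The dyadic pigeonholing on $r_x$ is likewise not free: the radii may occupy on the order of $|X|$ distinct dyadic scales, so the most popular scale could retain very few points, while the constant $c_{n,\ell}$ in the conclusion is required to be independent of $|X|$.

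For the $\ell=1$ core, the shared-difference encoding and the trivial bound $|X|\le\sum_{d>0}r_A(d)^n\le|A|^{n+1}$ are correct, and the diagnosis --- that the gain past the trivial exponent must come from the single radius $r_x$ being shared across all $n$ coordinates --- is the right instinct. But the proposed threshold split into low- and high-multiplicity centres, with additive energy on one side and a multilinear slicing on the other, is left entirely at the level of a heuristic, and you acknowledge this by stating that at that point you ``would follow Thornton's proof.'' That is honest, but it means what you have is a plan with a broken reduction step, not a proof; the hard combinatorial content of the lemma (which in Thornton's paper, building on the $n=2$, $\ell=1$ case of Keleti--Nagy--Shmerkin, is a rather delicate double-counting argument and not a routine energy/Loomis--Whitney interpolation) is exactly what is left unsupplied.
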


\begin{lemma}\label{chooseelement}
There is a constant $C_{n,k}<\infty$, depending only on $n,k$, such that the following holds. Let $\{S_k(x_i,r_i)\}_{i=1}^u$ be a finite collection of $k$-skeletons in $\rr^n$. Then it is possible to choose one $k$-face of each skeleton with the following property: If $V$ is an affine $k$-plane which is a translate of a coordinate $k$-plane, then $V$ contains at most
\[
C_{n,k}\displaystyle u^{1-\frac{(n-k)(2n-1)}{2n^2}}
\]
of the chosen $k$-faces.
\end{lemma}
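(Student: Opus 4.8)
The plan is to iterate the $(n,\ell)$-Dimensional Main Lemma, applied with $\ell = k$, in a greedy fashion. The key observation is that each $k$-skeleton $S_k(x_i,r_i)$ consists of $N = \binom{n}{k}2^{n-k}$ faces, and each face is a translate of one of the $\binom{n}{k}$ coordinate $k$-planes, sitting at a $\delta$-independent position determined by $x_i$, $r_i$ and a sign pattern $\sigma\in\{-1,1\}^{n-k}$. For a fixed coordinate $k$-plane direction $I\in{n\brack k}$, a $k$-face of $S_k(x_i,r_i)$ in direction $I$ lies in the affine plane $\{y : y_{I^c} = (x_i)_{I^c} + r_i\sigma\}$; in other words, which affine $k$-plane contains it is recorded exactly by the point $(x_i)_{I^c} + r_i\sigma \in \rr^{n-k}$. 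So the ``collision'' structure of faces into coordinate $k$-planes is governed precisely by how many points of the form $(x_i)_{I^c} + r_i\sigma$ coincide, which is the quantity the Main Lemma controls.

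Here is the iterative construction. Start with $X_0 = \{x_i : i = 1,\dots,u\}$ (if centers repeat, keep them with multiplicity, or perturb — this is a routine technicality). At stage $t$, if $X_t$ is nonempty, apply the Main Lemma: either $X_t$ is ``spread out'' in the sense that the associated set $A_t = \{(x_i)_{I^c} + r_i\sigma : x_i\in X_t,\ I\in{n\brack k},\ \sigma\in\{-1,1\}^{n-k}\}$ already satisfies $|A_t|\ge c_{n,k}|X_t|^{k(2n-1)/(2n^2)}$ — but in fact we do not get to choose: the Main Lemma \emph{asserts} this inequality always holds (its hypothesis is automatically satisfied by construction). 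So at every stage $|A_t|\gtrsim_{n,k} |X_t|^{k(2n-1)/(2n^2)}$. Wait — this bounds the number of \emph{distinct} affine planes, not the overlap. The correct use is: among the $N|X_t|$ faces of the skeletons indexed by $X_t$, they distribute among $\ge c_{n,k}|X_t|^{k(2n-1)/(2n^2)}$ distinct affine coordinate $k$-planes, so by pigeonhole \emph{some} plane $V_t$ contains $\le N|X_t| / (c_{n,k}|X_t|^{k(2n-1)/(2n^2)}) = C_{n,k} |X_t|^{1 - k(2n-1)/(2n^2)}$ faces — but we want the opposite, a plane is \emph{light}, not heavy. Let me restate: we want that no plane gets \emph{too many} chosen faces. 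So instead: since the faces spread over many planes, we can \emph{assign} to each skeleton $x_i\in X_t$ a face, choosing faces so that each plane receives few. Concretely, greedily process skeletons; when assigning a face to $S_k(x_i,\cdot)$ avoid planes already heavily used — feasible because each skeleton offers $N$ candidate planes and the Main Lemma guarantees the total plane-count grows like $|X_t|^{k(2n-1)/(2n^2)}$, forcing the average load, hence the achievable maximum load, down to $|X_t|^{1-k(2n-1)/(2n^2)}$. One clean way: fix the \emph{heaviest} plane $V$; it meets at most $C_{n,k}u^{1-(n-k)(2n-1)/(2n^2)}$ faces by the Main Lemma pigeonhole bound; remove those skeletons, recurse on the remainder, and \emph{re-index}. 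Since each round's exponent bound is in terms of the current count $\le u$, and the rounds partition the skeletons, every plane $V$ — examined in the round where it is heaviest — accumulates at most $C_{n,k}u^{1-(n-k)(2n-1)/(2n^2)}$ chosen faces, as claimed. (Note $1 - (n-k)(2n-1)/(2n^2)$ matches $1 - k(2n-1)/(2n^2)$ only after substituting the right $\ell$; with $\ell = k$ the Main Lemma exponent is $k(2n-1)/(2n^2)$, and pigeonholing $N|X|$ faces gives load $\le C_{n,k}|X|^{1-k(2n-1)/(2n^2)}$ — matching the statement, since the statement's exponent is $1-(n-k)(2n-1)/(2n^2)$; I will reconcile by applying the Main Lemma with $\ell = n-k$ acting on the \emph{co-skeleton} coordinates, which is the version that produces $(n-k)$ in the exponent.)

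The main obstacle — and the step requiring genuine care — is matching the correct value of $\ell$ in the Main Lemma to the combinatorics of $k$-faces so that the exponent comes out as $(n-k)(2n-1)/(2n^2)$ rather than $k(2n-1)/(2n^2)$. A $k$-face of an $n$-cube is determined by freezing $n-k$ coordinates to one of two values each; the set of ``frozen data'' is governed by the Main Lemma with $\ell = n-k$ applied to the tuples $x_{I^c} + r\sigma \in \rr^{n-k}$, which is exactly the hypothesis pattern $\forall J \in {n\brack n-k}\,\forall \sigma\in\{-1,1\}^{n-k} : x_J + r\sigma \in A$. This gives $|A|\ge c_{n,n-k}|X|^{(n-k)(2n-1)/(2n^2)}$, and pigeonholing the $N|X|$ faces over the $\ge c_{n,n-k}|X|^{(n-k)(2n-1)/(2n^2)}$ affine $k$-planes yields a single plane carrying at most $C_{n,k}|X|^{1-(n-k)(2n-1)/(2n^2)}$ faces. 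Iterating as above and re-indexing after deleting the skeletons whose chosen face lies on the current heaviest plane completes the proof; finiteness of the process is immediate since each round removes at least one skeleton. Everything else — handling repeated centers, verifying measurability is irrelevant here, bookkeeping the constant $C_{n,k}$ through the finitely many rounds — is routine.
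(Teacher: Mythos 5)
Your overall strategy matches the paper's: apply the $(n,\ell)$-Dimensional Main Lemma with $\ell = n-k$ to the points $(x_i)_{I^c} + r_i\sigma \in \rr^{n-k}$ (the ``frozen coordinates'' determining each $k$-face), obtain at least $c_{n,n-k}\,u^{(n-k)(2n-1)/(2n^2)}$ distinct affine $k$-planes, pigeonhole, assign faces, remove the corresponding skeletons, and recurse. The gap is in the pigeonhole step: you write ``fix the \emph{heaviest} plane $V$; it meets at most $C_{n,k}u^{1-(n-k)(2n-1)/(2n^2)}$ faces by the Main Lemma pigeonhole bound,'' which is backwards. The $uN$ faces distribute over at least $c_{n,k}u^{(n-k)(2n-1)/(2n^2)}$ planes, so the \emph{average} load is at most $uN/|A| \le C_{n,k}u^{1-(n-k)(2n-1)/(2n^2)}$, hence \emph{some} plane of at most average load meets this few; the \emph{heaviest} plane has load at least the average and can in fact carry faces from all $u$ skeletons (for instance if they all share a common face), so removing its skeletons and assigning that face to each of them would violate the bound you are trying to prove. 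The paper's proof selects a plane $V_{j_0}$ of at most average load, assigns the faces contained in $V_{j_0}$ to the $n_{j_0}$ skeletons that have one there, removes those skeletons, and iterates; since a selected plane then has no remaining faces among the surviving skeletons, it is never re-selected, and each selected plane receives at most $C_{n,k}u_t^{1-(n-k)(2n-1)/(2n^2)} \le C_{n,k}u^{1-(n-k)(2n-1)/(2n^2)}$ chosen faces where $u_t\le u$ is the current count. Replacing ``heaviest'' by ``lightest'' (or ``a plane of at most average load'') in your recursion recovers exactly the paper's argument. The earlier detour through $\ell = k$ is a red herring that you do ultimately resolve correctly, but the pigeonhole direction must be fixed for the proof to stand.
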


\begin{proof}

Given an affine $k$-plane $V$ in $\rr^n$, let $z(V)\in\rr^{n-k}$ denote the intersection point of $V$ with the subspace orthogonal to $V$. Now consider a $k$-skeleton of an $n$-cube with center $y$ and side length $2r$. Each of its $k$-faces belongs to a $k$-plane $V$ with
\[
z(V) = y_I + r \sigma ,
\]
where $\sigma \in \{-1,1\}^{n-k}$ and $I \in {n \brack n-k}$.

Let $A$ be the set of all affine $k$-planes containing some $k$-face of some  of the $S_k(x_i,r_i)$, and write
 \begin{align*}
 A' &= \{ z(V): V\in A\} \\
 &= \left\{  (x_j)_{I} + r_j\sigma :   I \in {n \brack n-k} , \sigma \in \{-1,1\}^{n-k} , 1\le j\le u\right\}.
\end{align*}
Applying the $(n,\ell)$-Dimensional Main Lemma with $\ell=n-k$ to $A'$ and $X=\{ x_1,\ldots, x_u\}$, and using the trivial bound $|A|\ge |A'|$, we get a lower bound for $|A|$ in terms of $u$:
\begin{equation*}
|A|\geq  c_{n,k} u^{(n-k)(2n-1)/2n^2}.
\end{equation*}

Let $V_1,\ldots,V_{|A|}$ be the different $k$-planes in $A$, and let $n_j$ the number of $k$-skeletons with some face contained in $V_j$. Then
\[
\displaystyle \sum_{j=1}^{|A|} n_{j}= uN \Rightarrow \exists \, j_0  \, \text{such that} \quad \displaystyle n_{j_{0}} \leq \frac{uN}{|A|} \leq c_{n,k}^{-1} N u^{1-\frac{(n-k)(2n-1)}{2n^2}}.
\]
For this value of $j_0$, choose the face of each of the $n_{j_0}$ $k$-skeletons which is contained in $V_{j_0}$.

Now, we inductively apply the same procedure for the remaining $u_1:=u-n_{j_0}$ skeletons, and we continue for as long as $u_p> u^{1-\frac{(n-k)(2n-1)}{2n^2}}$ (here $u$ is the original number of $k$-skeletons). If $u_p \le u^{1-\frac{(n-k)(2n-1)}{2n^2}}$, then we choose an arbitrary face from each of the remaining $k$ skeletons.

\end{proof}

We will now define several convenient discretizations. Given $x \in Q_0$, we will denote by $x^*$ the center of the half-open $n$-cube with vertices in $Q_0^*$ and side length $\delta$ containing $x$. 

\begin{defn}
Let $\Gamma$ denote the family of  all functions  $\rho: Q^*_0  \rightarrow [1,2]\cap \delta\Z$.

Enumerate $Q^*_0=\{x_1,\ldots,x_s\}$. Fix also $\rho\in\Gamma$.  For simplicity, let us write $S_{k,i}= S_k(x_i,r_i)$, where $r_i=\rho(x_i)$ and $1 \leq i \leq s$.

For this family of $k$-skeletons, we define the function $\Phi_{\rho}$:
\[
\Phi_{\rho} (S_{k}(x_i,r_i)) = \ell^j_i
\]
where $\ell^j_i$ denotes the face of $S_{k,i}$ chosen as in  Lemma \ref{chooseelement}.
\end{defn}

\begin{defn} \label{linearmaximal}  Given a function $\rho\in\Gamma$ and $0<\delta <1$, if $f \in L^{1}_{\rm loc}(\rr^n)$ we define the  \textit{$(\rho,k)$-skeleton maximal function} with width $\delta$,

\begin{center}
\begin{equation*}
{\widetilde{M}}^{k}_{\rho,\delta}f : Q_0 \rightarrow \R,
\end{equation*}
\begin{equation*}
{\widetilde{M}}^{k}_{\rho,\delta}f(x)= \displaystyle\frac{1}{\mathcal{L}(\ell_{x,\delta})} \int _ {\ell_{x,\delta}} f(y)\, dy,
\end{equation*}
\end{center}
where $\ell_{x,\delta}$ is the $\delta$-neighborhood of $\ell_{x}:=\Phi_{\rho} (S_{k}(x^*, \rho(x^*)))$.
\end{defn}

By definition,  $\widetilde{M}^k_{\rho,\delta}$ is a linear operator. We shall see that it is enough to control its behavior in order to control $M^{k}_{\delta}$ over the domain $Q_0$. Let $C Q_0$ denote the $n$-cube with the same center as $Q_0$ and side length $C$. Since $r$ is bounded by $2$, in the previous definition it is enough to consider functions $f$ supported on $7Q_0$, because if $x \in Q_0$ then $S_{k,\delta}(x,r) \subseteq 7Q_0$.

\begin{lemma} \label{lem:sup-over-discretizations} There exists a constant $C_{n,k}>0$ such that  if $0 < \delta <1$,
\[
 \norm{M^k_{\delta}}_{L^{p}\rightarrow L^{p}(Q_0)}\leq C_{n,k} \sup_{\rho\in\Gamma} \norm{\widetilde{M}^k_{\rho,3\delta}}_{L^{p}\rightarrow L^{p}(Q_0)}.
\]
\end{lemma}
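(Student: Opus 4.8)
The goal is to bound $M^k_\delta$ on $Q_0$ by a supremum of the linear operators $\widetilde M^k_{\rho,3\delta}$, so the strategy is the usual linearization: replace the supremum over $r$ and the minimum over faces by a single choice, made measurably, and absorb the discretization errors into a bounded loss in the scale (from $\delta$ to $3\delta$) and in the constant $C_{n,k}$.

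\begin{proof}[Proof sketch]
Fix $f\in L^p$ with $\|f\|_{L^p}=1$; we may assume $f\ge 0$ and, as noted above, that $f$ is supported on $7Q_0$. For each $x\in Q_0$, by definition of $M^k_\delta f(x)$ as a supremum over $1\le r\le 2$, we may choose $r(x)\in[1,2]$ so that
\[
\min_{j=1}^N \frac{1}{\mathcal L_n(S^j_{k,\delta}(x,r(x)))}\int_{S^j_{k,\delta}(x,r(x))} f \;\ge\; \tfrac12\, M^k_\delta f(x).
\]
The first step is to \emph{discretize the center and the radius}. If $x^*$ is the center of the $\delta$-cube in $Q_0^*$ containing $x$, then $|x-x^*|<\delta$, so each face $S^j_{k,\delta}(x,r(x))$ is contained in $S^j_{k,2\delta}(x^*,r(x))$, and moreover the Lebesgue measures of these two neighbourhoods are comparable up to a constant depending only on $n,k$ (both behave like $r^k\delta^{n-k}$ by \eqref{eq:measure-nbhd-face}, since $r\in[1,2]$). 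Next round $r(x)$ down to the nearest $\rho(x^*)\in[1,2]\cap\delta\Z$; then $|r(x)-\rho(x^*)|<\delta$, and again each $S^j_{k,2\delta}(x^*,r(x))\subseteq S^j_{k,3\delta}(x^*,\rho(x^*))$ with comparable measures. Combining, there is a constant $C_{n,k}$ such that for every $x\in Q_0$ and every face $j$,
\[
\frac{1}{\mathcal L_n(S^j_{k,\delta}(x,r(x)))}\int_{S^j_{k,\delta}(x,r(x))} f
\;\le\; C_{n,k}\,\frac{1}{\mathcal L_n(S^j_{k,3\delta}(x^*,\rho(x^*)))}\int_{S^j_{k,3\delta}(x^*,\rho(x^*))} f.
\]
Crucially, the discretized radius depends only on $x^*$, so $\rho$ is a well-defined element of $\Gamma$ (for width $\delta$, hence the functions $\rho\colon Q_0^*\to[1,2]\cap\delta\Z$ with $3\delta$-neighbourhoods are exactly those appearing in $\widetilde M^k_{\rho,3\delta}$). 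One should note in passing that $\rho$ depends on $f$; since the final estimate takes a supremum over all $\rho\in\Gamma$ this causes no problem.

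The second step handles the \emph{minimum over faces}. With $\rho$ now fixed, apply Lemma \ref{chooseelement} to the finite collection $\{S_k(x_i,\rho(x_i))\}_{x_i\in Q_0^*}$ to obtain the selection map $\Phi_\rho$, and recall that $\ell_x=\Phi_\rho(S_k(x^*,\rho(x^*)))$ is one specific face of the skeleton centred at $x^*$. Because $M^k_\delta f(x)$ is controlled by the \emph{minimum} over all faces, it is in particular controlled by the single chosen face $\ell_x$: chaining the previous display with the choice of $r(x)$,
\[
\tfrac12\, M^k_\delta f(x)\;\le\; C_{n,k}\,\frac{1}{\mathcal L_n((\ell_x)_{3\delta})}\int_{(\ell_x)_{3\delta}} f
\;=\; C_{n,k}\,\widetilde M^k_{\rho,3\delta}f(x),
\]
for every $x\in Q_0$. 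Taking $L^p(Q_0)$ norms gives $\|M^k_\delta f\|_{L^p(Q_0)}\le C_{n,k}\,\|\widetilde M^k_{\rho,3\delta}f\|_{L^p(Q_0)}\le C_{n,k}\,\|\widetilde M^k_{\rho,3\delta}\|_{L^p\to L^p(Q_0)}$, and since $\|f\|_{L^p}=1$ and $\rho\in\Gamma$ this yields the claim upon taking the supremum over $\rho$.

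\medskip

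\noindent\emph{Measurability remark.} One technical point deserves attention: for the $L^p$ norm of $\widetilde M^k_{\rho,3\delta}f$ to make sense, and for $M^k_\delta f$ to be measurable, one needs the selection $x\mapsto r(x)$ (or rather the resulting $x\mapsto \rho(x^*)$) to be measurable. But $x\mapsto x^*$ is a step function with finitely many values on $Q_0$, so $\rho(x^*)$ is automatically measurable, and on each $\delta$-cube $\ell_x$ is constant; thus $\widetilde M^k_{\rho,3\delta}f$ is a finite sum of averages over fixed sets, multiplied by indicators of $\delta$-cubes, hence manifestly measurable. That $M^k_\delta f$ itself is measurable follows from a standard argument (the supremum over $r\in[1,2]$ can be taken over a countable dense set by continuity of $r\mapsto \int_{S^j_{k,\delta}(x,r)}f$, which holds since $f\in L^1_{\mathrm{loc}}$ and the neighbourhoods vary continuously).

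\medskip

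\noindent The main obstacle is not any single estimate --- each containment and each comparison of measures is elementary --- but rather the \emph{bookkeeping of the order of operations}: the radius discretization must be done \emph{before} invoking Lemma \ref{chooseelement}, since that lemma needs a fixed finite family of skeletons, and the family depends on $\rho$; yet $\rho$ in turn depends on $f$. Getting this dependence structure right --- discretize $x\mapsto x^*$ and $r\mapsto\rho$ first (producing $\rho\in\Gamma$ depending on $f$), then apply the combinatorial selection to that fixed family, and only at the very end take $\sup_{\rho\in\Gamma}$ to eliminate the $f$-dependence --- is the part that requires care, and is exactly the ``extra layer'' flagged in the introduction.
\end{proof}
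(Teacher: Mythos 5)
The core plan is right, but there is a genuine gap in the discretization step. You first choose a near-maximizing radius $r(x)$ for \emph{each} $x\in Q_0$, and only afterwards ``round $r(x)$ down to the nearest $\rho(x^*)\in[1,2]\cap\delta\Z$.'' Since $r$ is a function of $x$ (not of $x^*$), two points $x\neq x'$ lying in the same $\delta$-cube (so $x^*=(x')^*$) may have $r(x)$ and $r(x')$ rounding to different multiples of $\delta$. The sentence ``Crucially, the discretized radius depends only on $x^*$, so $\rho$ is a well-defined element of $\Gamma$'' is therefore an unproved assertion, and it is false in general; without it the containment $S^j_{k,\delta}(x,r(x))\subseteq S^j_{k,3\delta}(x^*,\rho(x^*))$ cannot hold for all $x$ in the $\delta$-cube with a single value $\rho(x^*)$.

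The fix is to reverse the order: discretize the \emph{center} first, and only then choose the radius, so that the radius automatically depends only on $x^*$. This is what the paper does: one first shows $M^k_\delta f(x)\le 2^{n-k}\,M^k_{2\delta}f(x^*)$ using $S^j_{k,\delta}(x,r)\subseteq S^j_{k,2\delta}(x^*,r)$ for every $r$ simultaneously; then, for each of the finitely many $x^*\in Q_0^*$, one picks a near-maximizing $r'(x^*)$ in the definition of $M^k_{2\delta}f(x^*)$, rounds it to $[1,2]\cap\delta\Z$ to obtain $\rho(x^*)$, and passes from $2\delta$ to $3\delta$. With this order $\rho\in\Gamma$ is well-defined by construction. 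The remainder of your argument --- applying Lemma~\ref{chooseelement} to the resulting finite family of skeletons, bounding the minimum over faces by the single chosen face $\ell_x$, taking $L^p(Q_0)$ norms, and finally taking $\sup_{\rho\in\Gamma}$ to remove the $f$-dependence --- is correct and is the same as the paper's; your use of the factor $1/2$ in place of an $\eps\to 0$ limit is also fine.
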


\begin{proof}
Given $\eps>0$, pick $f \in L^p$ with $f\ge 0$ and $\|f\|_p=1$ such that
\[
 \norm{M^k_{\delta}}_{L^{p}\rightarrow L^{p}(Q_0)}\leq \norm{M^k_{\delta}f}_p + \eps .
\]
We claim that there exists a function $\rho\in\Gamma$ (depending on $f$) for which
\begin{equation} \label{eq:claim-discretization}
M^k_{\delta}f(x) \le 3^{n-k} \widetilde{M}^k_{\rho,3\delta} f(x) + \eps \quad \text{for all $x \in Q_0$}.
\end{equation}
Indeed, if $x \in Q_0$ it is easy to see that $S_{k,\delta}^j(x,r) \subset S_{k,2\delta}^j(x^*,r)$ for all $r\in [1,2]$ and all $1\le j\le N$ and therefore
\[
M^k_{\delta}f(x)\leq 2^{n-k} M^k_{2\delta}f(x^*).
\]

Now, given $\eps>0$ as above, for each $x^*$ there exists $r'=r'(x^*) \in [1,2]$ such that
\[
M^k_{2\delta}f(x^*) \leq \min_{j=1}^{N} \frac{1}{\mathcal{L}(S^j_{k,2\delta}(x^*,r'))} \displaystyle\int_{S^j_{k,2\delta}(x^*,r')} f(y)\, dy + \eps.
\]

Let $r=r(x^*) \in [1,2] \cap \delta\Z$ be the closest point to $r'$ (if there are two, pick the leftmost one) and define the function $x \rightarrow \rho(x^*)=r$. Since $S_{k,2\delta}^j(x^*,r')\subset S_{k,3\delta}^j(x^*,r)$ for all $j$,
\begin{eqnarray*}
M^k_{2\delta}f(x^*) &\leq&  (3/2)^{n-k} \min_{j=1}^{N} \frac{1}{\mathcal{L}(S^j_{k,3\delta}(x^*,r))}  \displaystyle\int_{S^j_{k,3\delta}(x^*,r)} f(y) \,dy + \eps/2 \\ &\leq& (3/2)^{n-k}\widetilde{M}^k_{\rho,3\delta}f(x) + \eps,
\end{eqnarray*}
We have shown that \eqref{eq:claim-discretization} holds. We conclude that
\[
\norm{M^k_{\delta}f}_{L^p(Q_0)} \leq 3^{n-k} \norm{\widetilde{M}^k_{\rho, 3\delta}f}_{L^p(Q_0)} + \eps,
\]
so that, recalling the choice of $f$ and letting $\e\to 0$, we obtain the claim.
\end{proof}


\subsection{bounds for the $(\rho,k)$-skeleton maximal operator} \label{sectionbounds}
In this section we will obtain bounds for the $(\rho,k)$-skeleton maximal operator on $L^p$ for certain values of $p$ using duality. For this, we use some ideas from \cite[Chapter 22]{Mattila15}.

The $n$ canonical vectors $e_1,\ldots, e_n$ in $\rr^n$ determine ${n \choose k}$ coordinate $k$-planes. We will denote these $k$-planes as $\pi_1,\ldots,\pi_{{n \choose k}}.$
Each $k$-face of a $k$-skeleton of an $n$-cube is contained in an affine $k$-plane which is a translate of some $\pi_{\omega}$, $1\leq \omega \leq {n \choose k} $; in this case we say that this $k$-face is parallel to $\pi_{\omega}$ (In the case $k=0$, the origin is the $0$-plane determined by the axes).

\begin{defn}
Let $\rho\in\Gamma$ and let $x_1, \ldots, x_s$ be an enumeration of $Q^*_0$. Consider the $k$-skeletons $S_{k,i}=S_k(x_i, \rho(x_i))$, $i=1,\ldots,s$. We define the sets
\[
E_{\pi_\omega}:= \{ x_i \in Q^*_0 :  \Phi(S_{k,i})= \ell^i_k \,  \text{is parallel to}\, \pi_\omega \},
\]

In the case $k=0$, we just consider the whole space $Q^*_0$.

\end{defn}

For notational convenience, we write $\psi:Q_0\to Q^*_0, x\mapsto x^*$. Observe that $\{ \psi^{-1}(E_{\pi_{\omega}})\}$ is a Borel partition of $Q_0$.

\begin{prop}
 \label{pq} Let $1 < p < \infty$, $q=\frac{p}{p-1}$, $0 < \delta < 1$, $0< K < \infty$,  and let $\rho\in\Gamma$ be given. Fix a $k$-plane $\pi$  determined by the canonical vectors, let $u=|E_{\pi}|$ and enumerate $E_\pi=\{x_1,\ldots,x_u\}$. Write $\ell^i_{k,\delta}$ for the $\delta$-neighborhood of $\Phi(S_{k}(x_i, \rho(x_i)))$. Assume that
\[
 \norm{\sum_{i=1}^{u}  t_{i} {\1}_{\ell^{i}_{k,\delta}}}_{L^{q}(7Q_0)} \leq K
\]
whenever $t_1,\ldots,t_u$ are positive numbers with
\[
 \delta^{n-kq}\displaystyle\sum_{i=1}^{u} t_i^q \leq 1.
\]

Then
 \[
 \norm{\widetilde{M}^k_{\rho,\delta}f}_{L^{p}(\psi^{-1}(E_{\pi}))} \leq K \norm{f}_{L^{p}(Q_0)}\, \text{for all $f \in L^p(7Q_0)$}.
 \]

\end{prop}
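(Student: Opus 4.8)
The plan is to prove this by a standard duality argument: the $L^p$ boundedness of $\widetilde{M}^k_{\rho,\delta}$ restricted to $\psi^{-1}(E_\pi)$ is equivalent to an $L^q$ bound on the adjoint operator, and the hypothesis is precisely the statement that the adjoint is bounded on the cone of nonnegative functions. First I would write out the averaging operator explicitly: for $x \in \psi^{-1}(E_\pi)$ we have $\widetilde{M}^k_{\rho,\delta}f(x) = \mathcal{L}(\ell_{x,\delta})^{-1}\int_{\ell_{x,\delta}} f$, and since $x^* = x_i$ for exactly one $i$ (and $x^*$ ranges over $E_\pi$ as $x$ ranges over $\psi^{-1}(E_\pi)$), the function $\widetilde{M}^k_{\rho,\delta}f$ is constant on each piece $\psi^{-1}(x_i)$ with value $c_i := \mathcal{L}(\ell^i_{k,\delta})^{-1}\int_{\ell^i_{k,\delta}} f$. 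Since each piece $\psi^{-1}(x_i)$ is a cube of side $\delta$, hence of measure $\delta^n$, we get $\norm{\widetilde{M}^k_{\rho,\delta}f}_{L^p(\psi^{-1}(E_\pi))}^p = \delta^n \sum_{i=1}^u |c_i|^p$.

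Next I would dualize. By duality in $\ell^p$, $\big(\delta^n\sum_i |c_i|^p\big)^{1/p} = \sup \big\{ \delta^n \sum_i c_i t_i : \delta^n\sum_i t_i^q \le 1,\ t_i \ge 0\big\}$ (using that we may take $f \ge 0$, so $c_i \ge 0$, so the supremum is attained on nonnegative $t_i$). Now interchange the order of summation and integration:
\begin{equation*}
\delta^n \sum_{i=1}^u c_i t_i = \delta^n \sum_{i=1}^u \frac{t_i}{\mathcal{L}(\ell^i_{k,\delta})} \int_{\ell^i_{k,\delta}} f(y)\,dy = \int_{7Q_0} f(y) \left( \delta^n \sum_{i=1}^u \frac{t_i}{\mathcal{L}(\ell^i_{k,\delta})} \1_{\ell^i_{k,\delta}}(y) \right) dy,
\end{equation*}
where I used that each $\ell^i_{k,\delta} \subset 7Q_0$ (as noted before Lemma \ref{lem:sup-over-discretizations}, since $f$ may be taken supported on $7Q_0$). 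By Hölder's inequality this is at most $\norm{f}_{L^p(7Q_0)}$ times the $L^q(7Q_0)$ norm of $g := \delta^n \sum_i \frac{t_i}{\mathcal{L}(\ell^i_{k,\delta})}\1_{\ell^i_{k,\delta}}$. It remains to bound $\norm{g}_{L^q(7Q_0)}$ by $K$. Set $s_i := \delta^n t_i / \mathcal{L}(\ell^i_{k,\delta})$, so $g = \sum_i s_i \1_{\ell^i_{k,\delta}}$; I must check that the $s_i$ satisfy the hypothesis's normalization $\delta^{n-kq}\sum_i s_i^q \le 1$. Using the measure estimate \eqref{eq:measure-nbhd-face}, $\mathcal{L}(\ell^i_{k,\delta}) = \mathcal{L}(S^j_{k,\delta}(x_i,r_i))$ is comparable to $r_i^k \delta^{n-k}$, and since $r_i \in [1,2]$ it is comparable to $\delta^{n-k}$ up to constants depending only on $n,k$; thus $s_i \asymp \delta^n t_i/\delta^{n-k} = \delta^k t_i$, giving $\delta^{n-kq}\sum_i s_i^q \asymp \delta^{n-kq}\delta^{kq}\sum_i t_i^q = \delta^n \sum_i t_i^q \le 1$.

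The only genuine subtlety — and the step I would be most careful about — is this constant-tracking in the normalization: the measure $\mathcal{L}(\ell^i_{k,\delta})$ is only \emph{comparable} to $\delta^{n-k}$, not equal, so after substituting $s_i = \delta^n t_i/\mathcal{L}(\ell^i_{k,\delta})$ one lands at $\delta^{n-kq}\sum_i s_i^q \le C_{n,k}$ rather than $\le 1$. This is harmless: rescaling $t_i \mapsto C_{n,k}^{-1/q} t_i$ (equivalently absorbing the constant), or simply applying the hypothesis to $C_{n,k}^{-1/q} s_i$ and using homogeneity of the $L^q$ norm, yields $\norm{g}_{L^q(7Q_0)} \le C_{n,k}^{1/q} K$; one can either fold this $C_{n,k}^{1/q}$ into the constant of the statement or, more cleanly, normalize the definition of $\widetilde{M}$ or of $\ell^i_{k,\delta}$ so that the comparability is an equality. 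For the write-up I would either absorb the constant into $K$ (noting it depends only on $n,k$) or, to match the statement verbatim, observe that one may replace $\mathcal{L}(\ell^i_{k,\delta})$ by $2^{n+1}\delta^{n-k}$ throughout (an overestimate by a bounded factor) at the cost of another harmless constant. Apart from this bookkeeping, the argument is just: evaluate the operator explicitly on the partition pieces, dualize, swap sum and integral, apply Hölder, and invoke the hypothesis — no geometric input beyond \eqref{eq:measure-nbhd-face} is needed here, since all the hard geometry has been pushed into the hypothesis about $\norm{\sum_i t_i \1_{\ell^i_{k,\delta}}}_{L^q}$.
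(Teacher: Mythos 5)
Your approach is the same as the paper's: evaluate $\widetilde{M}^k_{\rho,\delta}f$ on the partition pieces $\psi^{-1}(x_i)$, dualize, swap sum and integral, apply H\"older, invoke the hypothesis. The one step you flag as a ``genuine subtlety'' is not actually an obstruction, and your proposed workarounds (absorbing a constant into $K$, or redefining $\ell^i_{k,\delta}$) are unnecessary and would change the statement. The point is that $\mathcal{L}(\ell^i_{k,\delta})$ is not merely \emph{comparable} to $\delta^{n-k}$: by \eqref{eq:measure-nbhd-face} and $r_i=\rho(x_i)\ge 1$ we have the one-sided bound
\[
\mathcal{L}(\ell^i_{k,\delta}) = 2^n\bigl(r_i^k\delta^{n-k}+\delta^n\bigr) \ge 2^n\delta^{n-k},
\]
so $s_i = \delta^n t_i/\mathcal{L}(\ell^i_{k,\delta}) \le 2^{-n}\delta^k t_i$, and hence
\[
\delta^{n-kq}\sum_{i=1}^u s_i^q \;\le\; 2^{-nq}\,\delta^{n-kq}\delta^{kq}\sum_{i=1}^u t_i^q \;=\; 2^{-nq}\,\delta^n\sum_{i=1}^u t_i^q \;\le\; 2^{-nq} \;\le\; 1.
\]
The normalization in the hypothesis is therefore satisfied exactly, and you get the stated bound $K\norm{f}_{L^p}$ with no extra constant. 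The paper uses precisely this observation, phrased as $\delta^{n-k}/\mathcal{L}(\ell^i_{k,\delta})\le 2^{-n}\le 1$, so that the measure factor can simply be dropped before applying H\"older. With this correction your write-up matches the paper's proof.
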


\begin{proof}
Fix $f \in L^p(7Q_0)$. To begin, we estimate
\begin{align*}
\int_{\psi^{-1}(E_{\pi})} |\widetilde{M}^k_{\rho,\delta} f(x)|^{p} \, dx
&= \sum_{i=1}^{u} \int_{\psi^{-1}(x_i)}
 |\widetilde{M}^k_{\rho,\delta} f(x)|^{p} \, dx \\ &\leq \sum_{i=1}^{u} \mathcal{L}_n(\psi^{-1}(x_i)) |\widetilde{M}^k_{\rho,\delta} f(x_i)|^{p}  \\
 &= \delta^n \sum_{i=1}^{u}   |\widetilde{M}^{k}_{\rho,\delta}f(x_i)|^{p}.
\end{align*}
By duality, for any $a_{i} \geq 0$, $i=1, \ldots, u$,
\[
\displaystyle\left( \sum_{i=1}^{u} {a_{i}}^{p} \right)^{1/p}= \max  \left\{ \displaystyle\sum_{i=1}^{u} a_{i}b_{i} : b_{i}\geq 0, \sum_{i=1}^{u}{b_{i}}^{q}\le 1 \right\}.
\]
Applying this to $a_{i}= |\widetilde{M}^k_{\rho,\delta} f(x_{i})| \delta^{n/p} $, we get
\begin{align*}
 \norm{\widetilde{M}^k_{\rho,\delta}f}_{L^{p}(\psi^{-1}(E_{\pi}))}  &\leq \left( \sum_{i=1}^{u} \left(|\widetilde{M}^k_{\rho,\delta} f(x_{i})|\delta^{n/p}\right)^{p}\right)^{1/p}  \\
&=\max \left\{ \sum_{i=1}^{u} \delta^{n/p} b_i |\widetilde{M}^k_{\rho,\delta} f(x_{i})| : b_i \ge 0, \sum_{i=1}^u b_i^q\le 1 \right\} \\
&= \delta^{n-k} \max \left\{ \sum_{i=1}^{u} t_{i}  |\widetilde{M}^k_{\rho,\delta} f(x_{i})|:  \delta^{n-kq}\displaystyle\sum_{i=1}^{u} t_i^q \leq 1 \right\},
\end{align*}
making the change of variable $t_{i}=\delta^{\frac{n}{p} -(n-k) }b_i$.

Therefore there exist $t_i\ge 0$ with $\delta^{n-kq}\sum_{i=1}^{u} t_i^q \leq 1$ such that
\begin{align*}
 \norm{\widetilde{M}^k_{\rho,\delta}f}_{L^{p}(\psi^{-1}(E_{\pi}))}  &\leq \delta^{n-k}\displaystyle\sum_{i=1}^{u} t_{i} | \widetilde{M}^k_{\rho,\delta} f(x_{i})|\\
&\leq \delta^{n-k}\displaystyle\sum_{i=1}^{u} t_{i} \displaystyle\frac{1}{\mathcal{L}(\ell^{i}_{k,\delta})}\displaystyle \int _ {\ell^{i}_{k,\delta}} |f(y)|\,dy  \\
&\leq 2^{-n}\displaystyle {\sum_{i=1}^{u} t_{i}}\displaystyle \int _ {\ell^{i}_{k,\delta}} |f(y)|\,dy,
\end{align*}
using \eqref{eq:measure-nbhd-face} and that $\rho(x_i)\ge 1$ for all $i$.

Finally, by H\"{o}lder's inequality (and bounding $2^{-n}\le 1$)
\begin{align*}
\norm{\widetilde{M}^k_{\rho,\delta}f}_{L^{p}(\psi^{-1}(E_\pi))} &\leq   \displaystyle \int_{7Q_0} \left(\sum_{i=1}^{u} t_{i} {\1}_{{\ell^{i}_{k,4\delta}}}\right) |f(y)|\, dy\\
&\leq  \displaystyle \norm {\sum_{i=1}^{u} t_{i} {{\1}_{{\ell^{i}_{k,\delta}}}}}_{L^q} \norm{f}_{L^p(7 Q_0)}\\
 &\leq  K \norm{f}_{L^{p}(7Q_0)}.
 \end{align*}
This finishes the proof.
\end{proof}

Let $m'=\frac{m}{m-1}$, with $2\leq m \in \mathbb{N}$.
\begin{prop}\label{prop:q=2m} For all $0<\delta<1$, $\rho\in\Gamma$, $2 \leq m \in \mathbb{N}$ and $f \in L^{m'}(7Q_0)$,
\begin{equation} \label{p,q=2m}
 \norm{\widetilde{M}^k_{\rho,\delta}f}_{L^{m'}(Q_0)} \leq  C_{n,k} \delta^{{\frac{k-n}{2n}}.\frac{1}{m'}} \norm{f}_{L^{m'}(7Q_0)},
\end{equation}
where $C_{n,k}>0$ depends on $n,k$ only (in particular, it is independent of $\rho$ and $m$).
\end{prop}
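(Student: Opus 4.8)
\textbf{Proof proposal for Proposition \ref{prop:q=2m}.}

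The plan is to combine Lemma \ref{lem:sup-over-discretizations}'s philosophy with Proposition \ref{pq}: it suffices to verify the hypothesis of Proposition \ref{pq} with $q = m$ (so $p = m'$) and an explicit constant $K = C_{n,k}\,\delta^{(k-n)/(2nm')}$, for each of the finitely many pieces $E_{\pi}$ of the partition indexed by coordinate $k$-planes $\pi$; summing the resulting $L^{m'}$-bounds over the $\binom{n}{k}$ pieces (at the cost of a factor $\binom{n}{k}^{1/m'} \le C_{n,k}$) gives the global bound on $Q_0$. So the heart of the matter is: given positive weights $t_1,\dots,t_u$ on the chosen faces $\ell^i_{k,\delta}$, all parallel to a \emph{single} coordinate $k$-plane $\pi$, with the normalization $\delta^{n-km}\sum_i t_i^m \le 1$, bound $\|\sum_i t_i \1_{\ell^i_{k,\delta}}\|_{L^m(7Q_0)}^m = \int (\sum_i t_i \1_{\ell^i_{k,\delta}})^m$.

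First I would expand the $m$-th power: $\int (\sum_i t_i\1_{\ell^i_{k,\delta}})^m = \sum_{i_1,\dots,i_m} t_{i_1}\cdots t_{i_m}\, \mathcal{L}_n(\ell^{i_1}_{k,\delta}\cap\cdots\cap\ell^{i_m}_{k,\delta})$. The key geometric input is an estimate for the measure of an $m$-fold intersection of $\delta$-neighborhoods of $k$-faces, all parallel to $\pi$. Each such face lies in an affine $k$-plane which is a translate of $\pi$, pinned down by its offset $z(V_i) \in \rr^{n-k}$ (in the notation of Lemma \ref{chooseelement}); two such neighborhoods either lie in essentially the same affine $k$-plane (if the offsets are $O(\delta)$-close) or their intersection has $\mathcal{L}_n$-measure bounded by $C_{n,k}\delta^{n-k}\cdot\delta^{k}=C_{n,k}\delta^n$ when the offsets differ — more precisely, only the first coordinate differences matter, and I expect $\mathcal{L}_n(\ell^{i}_{k,\delta}\cap \ell^{j}_{k,\delta}) \le C_{n,k}\,\delta^{n-k}\cdot \delta^{k}$ whenever $|z(V_i)-z(V_j)|\gtrsim\delta$, improving to the full $\delta^{n}$ only when they are transverse in all $n-k$ offset coordinates. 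The crucial point is that, by the choice of faces via Lemma \ref{chooseelement}, \emph{any fixed affine $k$-plane $V$ (a translate of $\pi$) contains at most $C_{n,k} u^{1-(n-k)(2n-1)/(2n^2)}$ of the $\ell^i_k$}, so the "concentrated" part of the intersection sum is controlled.

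The estimate then proceeds by splitting the $m$-fold sum according to how many of the indices $i_1,\dots,i_m$ land in a common affine $k$-plane versus how many are "spread out". I would dominate the whole sum by first summing over $i_1$ freely (giving $\mathcal{L}_n(\ell^{i_1}_{k,\delta}) \le C_{n,k}\delta^{n-k}$ and a factor $\sum t_i$ bounded via H\"older and the normalization by $u^{1/m'}\delta^{(km-n)/m}$), and then for each subsequent index using the Lemma \ref{lem:sup-over-discretizations}-type plane-overlap bound $u^{1-(n-k)(2n-1)/(2n^2)}$ together with $\mathcal{L}_n(\ell^{i}_{k,\delta}\cap\ell^{i_1}_{k,\delta})\le C_{n,k}\delta^n$ when the offsets are separated. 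Tracking the powers of $\delta$ and of $u$, the combinatorial overlap bound $(n-k)(2n-1)/(2n^2)$ is designed so that all powers of $u$ cancel against the $u^{1/m'}$ factors from the normalization, leaving exactly $\delta^{m\cdot(k-n)/(2nm')}$ after taking the $m$-th root; so the final arithmetic, while tedious, is forced. The main obstacle is carrying out this book-keeping cleanly for general $m$: one must handle all the ways the $m$ indices can cluster into groups sharing a common $k$-plane, and verify that the worst case — which should be the "fully spread" configuration, where each new index contributes a factor $u^{1-(n-k)(2n-1)/(2n^2)}\cdot \delta^{k}$ relative to $\delta^{n-k}$ — indeed produces the advertised exponent and no extra powers of $u$. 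I would organize this by induction on $m$, peeling off one index at a time and using the uniform overlap bound, so that the constant $C_{n,k}$ stays independent of $m$.
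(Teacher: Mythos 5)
Your reduction to Proposition~\ref{pq} (with $q=m$, $p=m'$, summing over the $\binom{n}{k}$ coordinate $k$-planes at the cost of a constant) and the expansion of $\|\sum_i t_i\1_{\ell^i_{k,\delta}}\|_{L^m}^m$ into the $m$-fold intersection sum are exactly the paper's first two moves, and you also correctly identify Lemma~\ref{chooseelement} as the combinatorial input. (Note, though, that the overlap lemma you want is Lemma~\ref{chooseelement}, not Lemma~\ref{lem:sup-over-discretizations}, which is the discretization lemma.) But there are two genuine gaps in how you close the estimate.

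First, the intersection estimate you propose is off. The paper uses only the trivial bound
\[
L_{i_1\ldots i_m}:=\mathcal{L}_n\bigl(\ell^{i_1}_{k,\delta}\cap\cdots\cap\ell^{i_m}_{k,\delta}\bigr)\le\mathcal{L}_n(\ell^{i_1}_{k,\delta})\le C_{n,k}\,\delta^{n-k},
\]
which follows from \eqref{eq:measure-nbhd-face}. Your proposed pairwise bound $\mathcal{L}_n(\ell^i_{k,\delta}\cap\ell^j_{k,\delta})\le C_{n,k}\delta^n$ when the offsets are $\gtrsim\delta$-separated is not correct: two parallel $k$-faces in planes whose offsets differ by $O(\delta)$ in all $n-k$ transverse coordinates can overlap fully in the $k$ longitudinal directions, so their intersection has measure up to $\approx\delta^{n-k}$, not $\delta^n$; and if the offsets differ by $\gg\delta$ in some coordinate the neighborhoods are simply disjoint. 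Also, $L_{i_1\ldots i_m}$ is a single measure, not a product of pairwise intersection measures, so one cannot accumulate a factor $\delta^n$ per additional index.

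Second, and more importantly, the bookkeeping for the weights $t_i$ does not close as you describe. Bounding $\sum_i t_i\le u^{1/m'}\delta^{(mk-n)/m}$ and handling the remaining $m-1$ weights by a max or a cluster-wise H\"older introduces an uncancelled factor of $u^{1/m'}$ (or more); substituting $u\le\delta^{-n}$ and tracking the exponent of $\delta$ one lands strictly below the target $(m-1)(k-n)/(2n)$, off by $\delta^{-n/m'}$. The missing idea is the paper's multilinear H\"older step: write $L_{i_1\ldots i_m}=L^{1/m}\cdots L^{1/m}$ and apply H\"older with $m$ equal exponents,
\[
\sum_{i_1,\ldots,i_m}t_{i_1}\cdots t_{i_m}L_{i_1\ldots i_m}\le\prod_{j=1}^{m}\Bigl(\sum_{i_1,\ldots,i_m}t_{i_j}^{\,m}L_{i_1\ldots i_m}\Bigr)^{1/m}=\sum_{i_1}t_{i_1}^{m}\sum_{i_2,\ldots,i_m}L_{i_1\ldots i_m},
\]
the last equality by the symmetry of $L$ in its indices. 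This collapses all the weight into the single sum $\sum_i t_i^m\le\delta^{mk-n}$ with no $u^{1/m'}$ loss, and then the count from Lemma~\ref{chooseelement} applied to each of $i_2,\ldots,i_m$ together with $L_{i_1\ldots i_m}\le C_{n,k}\delta^{n-k}$ and $u\le\delta^{-n}$ gives exactly the claimed exponent, with a constant that is visibly of the form $C_{n,k}^m$ and hence independent of $m$ after taking the $m$-th root. Your closing suggestion of an induction on $m$ is unnecessary once this H\"older step is in place.
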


\begin{proof}
In the course of the proof $C_{n,k}$ denote positive constants that depend on $n$ and $k$ only; their value can change from line to line.

Let $f \in L^{m'}(7Q_0)$ and consider the coordinate $k$-planes $\pi_1,\ldots,\pi_{{n \choose k}}$. It is enough to bound the $L^{m'}$-norm of $\widetilde{M}^k_{\rho,\delta}f$ over each set $\psi^{-1}(E_{\pi_\omega})$.  Hence we fix $\omega$ and we work with $\pi_\omega$ for the rest of the proof. Let $x_1,\ldots,x_u$ be the points in $\psi^{-1}(E_{\pi_{\omega}})$ and let $\ell^1_{k,\delta}\ldots,\ell^u_{k,\delta}$ be as in Proposition \ref{pq}. By this proposition, it is enough to show that
\begin{equation} \label{eq:Lm-norm-enough-to-show}
 \norm{\sum_{i=1}^{u}  t_{i} {\1}_{\ell^i_{k,\delta}}}_{L^{m}(7Q_0)} \leq C_{n,k} \delta^{{\frac{k-n}{2n}}.\frac{1}{m'}},
\end{equation}
whenever $t_1,\ldots,t_u$ are positive real numbers with $\delta^{n-mk}\displaystyle\sum_{i=1}^{u} t_i^m \leq 1$. Fix, then $t_1,\ldots,t_u$ satisfying this. Let
\[
\text{I} := \displaystyle \norm {\sum_{i=1}^{u} t_{i} {\1}_{{\ell^{i}_{k,\delta}}}}_{L^{m}(7Q_0)}^{m} = \sum_{i_1,\ldots ,i_{m}=1}^{u} t_{i_1}\ldots t_{i_{m}} \mathcal{L}_n({\ell^{i_1}_{k,\delta}} \cap \ldots \cap  {\ell^{i_{m}}_{k,\delta}}).
\]
The sum in $\text{I}$ involves the measures of the intersections of $\delta$-neighborhoods of parallel $k$-faces taken from $m$ different $k$-skeletons. We note that this measure will often be $0$ and, by \eqref{eq:measure-nbhd-face}, is always bounded above by $2^{n+k+1}\delta^{n-k}$.

For simplicity, let us write
\[
L_{i_1\ldots i_m} = \mathcal{L}_n\left({\ell^{i_1}_{k,\delta}} \cap \ldots \cap  \ell^{i_{m}}_{k,\delta}\right).
\]
Using H\"{o}lder's inequality, we estimate
\begin{align}
\text{I} &= \sum^{u}_{i_1,\ldots,i_{m} = 1} t_{i_1}\cdots t_{i_{m}} L_{i_1\ldots i_m}^{1/m}\cdots L_{i_1\ldots i_m}^{1/m} \nonumber \\
&\leq \left( \sum^{u}_{i_1,\ldots,i_{m} = 1} t^{m}_{i_1}L_{i_1\ldots i_m}\right)^{1/m}\ldots \left( \sum^{u}_{i_1,\ldots,i_{m} = 1} t^{m}_{i_{m}} L_{i_1\ldots i_m}\right)^{1/m} \nonumber \\
&= \sum_{i_1=1}^u t_{i_1}^m  \sum_{i_2,\ldots,i_m=1}^u L_{i_1\ldots i_m}, \label{eq:bound-I}
\end{align}
since all the factors in the second line are equal.

Recall that the faces were selected according to Lemma \ref{chooseelement}, and that any two faces which are not contained in the same plane are $\delta$-separated. It follows that
if we fix the value of $i_1$, there are at most
\[
C_{n,k}^{m-1} u^{(m-1)\left(1-\tfrac{(n-k)(2n-1)}{2n^2}\right)}
\]
tuples $(i_2,\ldots,i_m)$ such that $\ell^{i_1}_{k,\delta} \cap \ldots \cap  \ell^{i_{m}}_{k,\delta}\neq\varnothing$. Since $u=|E_\pi|\le \delta^{-n}$, for each fixed $i_1$ we estimate
\[
\sum_{i_2,\ldots,i_m=1}^{u} L_{i_1\ldots i_m} \leq 2^{n+k+1}\delta^{n-k} C_{n,k}^{m-1}\left(\delta^{-n + \frac{(n-k)(2n-1)}{2n}}\right)^{m-1}.
\]
Combining this with \eqref{eq:bound-I}, and then using that $\sum_{i=1}^u t_i^m \le \delta^{mk-n}$, after some algebra we get
\begin{align*}
\text{I} &\le  C_{n,k}^m\left(\delta^{-n + \frac{(n-k)(2n-1)}{2n}}\right)^{m-1} \sum_{i_1=1}^u t_{i_1}^m  \\
   &\le  C_{n,k}^m \delta^{\frac{(k-n)(m-1)}{2n}}.
\end{align*}
This establishes \eqref{eq:Lm-norm-enough-to-show} and finishes the proof.

\end{proof}

By letting $m\to\infty$ in Proposition \ref{prop:q=2m}, we obtain the following corollary:
\begin{cor} \label{cor:p=1} For all $0<\delta<1$, $\rho\in\Gamma$ and $f \in L^1(7Q_0)$,
\[
 \norm{\widetilde{M}^k_{\rho,\delta}f}_{L^{1}(Q_0)} \leq C_{n,k}\delta^{\frac{k-n}{2n}} \norm{f}_{L^{1}(7Q_0)}.
\]
\end{cor}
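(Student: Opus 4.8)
The plan is to obtain the $L^1$ bound as a limiting case of the $L^{m'}$ bounds from Proposition \ref{prop:q=2m}. Observe that, as $m\to\infty$, the conjugate exponent $m'=m/(m-1)$ decreases monotonically to $1$, and the exponent appearing in the right-hand side of \eqref{p,q=2m},
\[
\frac{k-n}{2n}\cdot\frac{1}{m'} = \frac{k-n}{2n}\cdot\frac{m-1}{m},
\]
converges to $\frac{k-n}{2n}$. Moreover, crucially, the constant $C_{n,k}$ in Proposition \ref{prop:q=2m} is uniform in $m$ (and in $\rho$). So at a heuristic level the corollary is just the $m\to\infty$ limit of \eqref{p,q=2m}, and the only real work is to justify passing to the limit rigorously at the level of the $L^1$ and $L^{m'}$ norms of a fixed function.

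First I would fix $\rho\in\Gamma$, $0<\delta<1$, and $f\in L^1(7Q_0)$. Since we want an $L^1$ conclusion and the $L^{m'}$ norms are over the bounded domains $Q_0$ (for the output) and $7Q_0$ (for $f$), I would like to reduce to $f\in L^\infty$ first: by the usual density/truncation argument it suffices to prove the inequality for $f=\mathbf{1}_E\cdot g$ bounded, or simply note that bounded functions on $7Q_0$ are dense in $L^1(7Q_0)$ and that $\widetilde M^k_{\rho,\delta}$ is (for fixed $\delta$) bounded $L^1\to L^1$ by the Hardy--Littlewood-type estimate already mentioned in the text, so both sides of the desired inequality are continuous in $f$ with respect to the $L^1(7Q_0)$ norm. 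Hence assume $f\in L^\infty(7Q_0)\subset L^{m'}(7Q_0)$ for every $m$. Then apply Proposition \ref{prop:q=2m}: for every integer $m\ge 2$,
\[
\norm{\widetilde M^k_{\rho,\delta}f}_{L^{m'}(Q_0)} \le C_{n,k}\,\delta^{\frac{k-n}{2n}\cdot\frac{1}{m'}}\norm{f}_{L^{m'}(7Q_0)}.
\]

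Now I would let $m\to\infty$, i.e. $m'\downarrow 1$. On the left, since $Q_0$ has measure $1$ and $\widetilde M^k_{\rho,\delta}f\in L^\infty(Q_0)$ (again because $f$ is bounded and the averaging is over a set of positive measure), the map $p\mapsto \norm{\widetilde M^k_{\rho,\delta}f}_{L^p(Q_0)}$ is continuous on $[1,\infty)$; in particular $\norm{\widetilde M^k_{\rho,\delta}f}_{L^{m'}(Q_0)}\to \norm{\widetilde M^k_{\rho,\delta}f}_{L^1(Q_0)}$. Likewise, since $f\in L^\infty(7Q_0)$ on a domain of finite measure, $\norm{f}_{L^{m'}(7Q_0)}\to\norm{f}_{L^1(7Q_0)}$, and the exponent $\delta^{\frac{k-n}{2n}\cdot\frac{1}{m'}}\to\delta^{\frac{k-n}{2n}}$ since $\delta\in(0,1)$ is fixed and $x\mapsto\delta^x$ is continuous. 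Passing to the limit in the displayed inequality therefore yields
\[
\norm{\widetilde M^k_{\rho,\delta}f}_{L^1(Q_0)} \le C_{n,k}\,\delta^{\frac{k-n}{2n}}\norm{f}_{L^1(7Q_0)}
\]
for all bounded $f$, and then for all $f\in L^1(7Q_0)$ by the density argument above. The uniformity of $C_{n,k}$ in $m$ is what makes this work, so I would make sure to emphasize that point; beyond it, the only mild subtlety is the interpolation-of-$L^p$-norms continuity, which is a completely standard consequence of finite measure plus boundedness of the relevant functions, so I do not expect any genuine obstacle here.
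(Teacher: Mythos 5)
Your proof is correct and follows essentially the same route as the paper: fix $f\in L^\infty(7Q_0)$, apply Proposition \ref{prop:q=2m} for each $m$, and let $m\to\infty$ using that both $\norm{f}_{m'}\to\norm{f}_1$ and $\norm{\widetilde M^k_{\rho,\delta}f}_{m'}\to\norm{\widetilde M^k_{\rho,\delta}f}_1$ on the finite-measure domains. The only difference is that you make the density argument extending from $L^\infty$ to $L^1$ explicit, whereas the paper leaves it implicit.
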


\begin{proof} Fix $f \in L^\infty(7Q_0)$. We have $\norm{f}_{m'} \rightarrow \norm{f}_{1}$ when $m \rightarrow \infty$. Likewise, since $\widetilde{M}^k_{\rho,\delta} \,f$ is bounded, we have $\norm{\widetilde{M}^k_{\rho,\delta}\,f}_{m'} \rightarrow \norm{\widetilde{M}^k_{\rho,\delta}\,f}_{1}$ as $m \rightarrow \infty$. The claim then follows from Proposition \ref{prop:q=2m} by letting $m\to\infty$.
\end{proof}


\subsection{Conclusion of the proof}

We are now able to conclude the proof of Theorem \ref{thm:main-theorem}.

\begin{proof}[Proof of Theorem \ref{thm:main-theorem}]
In light of Proposition \ref{prop:lowerbound}, we only need to establish the upper bound.

It follows from  Corollary \ref{cor:p=1}, the trivial bound
\[
\norm{\widetilde{M}^{k}_{\rho,3\delta}f}_{L^\infty(Q_0)}\le \|f\|_{L^\infty(7 Q_0)}
\]
and the Riesz-Thorin Theorem (see e.g. \cite[Theorem 1.3.4]{GRFK}) that
\[
\norm{\widetilde{M}^{k}_{\rho,3\delta}f}_{L^p(Q_0)} \leq C_{n,k} \delta^{\frac{k-n}{2np}} \norm{f}_{L^p(7 Q_0)},
\]
for all $f \in L^p(7 Q_0)$,  $1 \le p \le \infty$.

 Since this bound is independent of $\rho\in\Gamma$, it follows from Lemma \ref{lem:sup-over-discretizations} that
 \begin{equation} \label{cotaencubos}
  \norm{M^{k}_{\delta}f}_{L^p(Q_0)}\le C_{n,k} \delta^{\frac{k-n}{2np}}\norm{f}_{L^p(7Q_0)}.
   \end{equation}

For each $z=(z_1,\ldots,z_n) \in \Z^n$ we denote
\[
Q_z = [z_1,z_1+1)\times\cdots\times [z_n,z_n+1).
\]
By translation invariance, \eqref{cotaencubos} continues to hold if we replace $Q_0$ by $Q_z$ on both sides of the inequality. This allows us to extend the bound to all of $\rr^n$. Given $f\in L^p(\rr^n)$, we have
\begin{align*}
\int_{\rr^n} |{M}^k_{\delta}f(x)|^p dx &= \displaystyle \sum_{z\in\Z^n} \int {\1}_{Q_z} |{M}^k_{\delta}f(x)|^p\,dx \\
&\le C_{n,k}^p \delta^{\frac{k-n}{2n}} \displaystyle \sum_{z\in\Z^n}   \int_{7Q_z} |f(x)|^p \,dx\\
&=  C_{n,k}^p \delta^{\frac{k-n}{2n}} \int \displaystyle \left(\sum_{z\in\Z^n}{\1_{7Q_z}}(x)\right) |f(x)|^p\, dx \\
&\leq C'  C_{n,k}^p \delta^{\frac{k-n}{2n}} \norm{f}^p_{L^p(\rr^n)},
\end{align*}
where $C'=\norm{\sum_{z\in\Z^n}{\1_{7Q_z}}}_\infty$.
\end{proof}

\section{An extension and an application}

\label{sec:extensions}

\subsection{An unrestricted extension}

In this section we extend Theorem \ref{thm:main-theorem} to the following unrestricted version:
\begin{defn} \label{def:unrestricted}
For $f \in L^1_{\textrm{loc}}(\rr^n)$ we define:

\begin{equation*}
\mathbf{M}^{k}_{\delta}f(x) = \sup\limits_{r: \rr^2\rightarrow (\delta,2]} A^k_{r,\delta}f(x),
\end{equation*}
where
\[
A^k_{r,\delta}f(x)=  \min\limits_{j=1}^{N}  \displaystyle\frac{1}{\mathcal{L}(S_{k,\delta}^{j}(x,r(x)))} \int _ {S_{k,\delta}^{j}(x,r(x))} |f(y)| dy.
\]
\end{defn}

\begin{thm} \label{thm:main-theorem-unrestrcited} Given $0\leq k <n$, $p \in [1,\infty)$, there exist a positive constant $C_{n,k}$ such that
\[
\norm{\mathbf{M}^k_{\delta}f}_{L^p(\rr^n)} \leq C_{n,k} \delta^{\frac{k-n}{2np}} \norm{f}_{L^p(\rr^n)}  \, \, \textup{for all} \, \, f \in L^p(\rr^n).
\]
\end{thm}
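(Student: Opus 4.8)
The plan is to reduce the unrestricted operator $\mathbf{M}^k_\delta$ to countably many restricted operators, one for each dyadic scale of the radius, and then invoke Theorem \ref{thm:main-theorem}. The point is that although the radius $r(x)$ now ranges over $(\delta,2]$ rather than $[1,2]$, it is still bounded above by $2$, and for each fixed dyadic block $r(x)\in(2^{m-1},2^m]$ (for $m\le 1$) a rescaling by $2^{-m}$ turns the averages over $\delta$-neighborhoods of $k$-skeletons of side $\sim 2^m$ into averages over $(2^{-m}\delta)$-neighborhoods of $k$-skeletons of side in $(1/2,1]$, i.e.\ essentially the restricted setting at the smaller width $2^{-m}\delta$. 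The gain comes from the fact that $\|M^k_{\delta'}\|_{L^p\to L^p}\le C_{n,k}(\delta')^{(k-n)/(2np)}$, and summing $(2^{-m}\delta)^{(k-n)/(2np)}$ over $m\le 1$ does \emph{not} obviously converge, so the dyadic decomposition must be handled with care — this is the main obstacle, see below.

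\textbf{Key steps.} First I would observe that, by a covering argument as in the end of the proof of Theorem \ref{thm:main-theorem} (decomposing $\rr^n$ into unit cubes $Q_z$ and using translation invariance together with finite overlap of the dilated cubes $7Q_z$), it suffices to prove the bound for $f$ supported near a fixed unit cube and with the output norm taken over that cube. Second, for each integer $m\le 1$ let $\mathbf{M}^{k,m}_\delta$ be the operator defined exactly as $\mathbf{M}^k_\delta$ but with the supremum restricted to functions $r$ taking values in $(2^{m-1},2^m]$; then pointwise $\mathbf{M}^k_\delta f(x)=\sup_{m\le 1}\mathbf{M}^{k,m}_\delta f(x)$, so
\[
\|\mathbf{M}^k_\delta f\|_{L^p}^p \le \sum_{m\le 1}\|\mathbf{M}^{k,m}_\delta f\|_{L^p}^p.
\]
Third, I would relate $\mathbf{M}^{k,m}_\delta$ to the restricted operator at width $2^{-m}\delta$ via the bi-Lipschitz scaling $g_m(x)=2^{-m}x$: a computation with \eqref{eq:measure-nbhd-face} shows that $\mathbf{M}^{k,m}_\delta f(x)$ equals (up to a constant depending only on $n,k$) $M^k_{2^{-m}\delta}(f\circ g_m^{-1})(2^{-m}x)$ evaluated appropriately — here one uses that $S^j_{k,\delta}(x,r)=g_m^{-1}(S^j_{k,2^{-m}\delta}(2^{-m}x,2^{-m}r))$ and that the Jacobian factors cancel in the normalized average. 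Wait — actually, since $2^{-m}\ge 1/2$ for $m\le 1$, one must also replace the admissible radius interval $(1/2,1]$ by $[1,2]$; this is a harmless dilation by $2$, absorbed into the constant. Changing variables in the $L^p$ norm then gives $\|\mathbf{M}^{k,m}_\delta f\|_{L^p(\rr^n)}\le C_{n,k}\,2^{mn/p}\,\|M^k_{2^{-m}\delta}(f\circ g_m^{-1})\|_{L^p(\rr^n)}$, and then Theorem \ref{thm:main-theorem} together with $\|f\circ g_m^{-1}\|_{L^p}=2^{mn/p}\|f\|_{L^p}$ yields
\[
\|\mathbf{M}^{k,m}_\delta f\|_{L^p(\rr^n)}\le C_{n,k}\,2^{2mn/p}\,(2^{-m}\delta)^{\frac{k-n}{2np}}\,\|f\|_{L^p(\rr^n)}.
\]

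\textbf{The main obstacle, and its resolution.} The exponent of $2^{-m}$ in the last display is $\frac{k-n}{2np}$, which is \emph{negative}; combined with the $2^{2mn/p}$ prefactor, the total power of $2^{-m}$ is $\frac{k-n}{2np}-\frac{2n}{p}$, and since $m\to-\infty$ this blows up — so a naive sum over all $m\le 1$ diverges. The fix is that $\mathbf{M}^{k,m}_\delta f$ is not merely controlled by the restricted maximal bound: for $m$ very negative, i.e.\ $2^{-m}\delta\ge 1$, the operator $M^k_{\delta'}$ with $\delta'\ge 1$ is trivially bounded on $L^p$ with a constant depending only on $n,k$ (the averaging sets then have measure comparable to a fixed constant and are contained in a bounded neighborhood), so one should only run the scaling argument for those $m$ with $2^{-m}\delta<1$, i.e.\ $2^m>\delta$, i.e.\ $m>\log_2\delta$. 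For the finitely many $m\in(\log_2\delta,1]$ — there are $O(\log(1/\delta))$ of them — one must argue more carefully that the relevant averaging regions $S^j_{k,\delta}(x,r(x))$ with $r(x)\in(2^{m-1},2^m]$ live in a bounded region once $x$ is confined to a unit cube (true since $r\le 2$), and then sum the geometric-type series; here the decisive observation is that the correct scaling relation is width-to-side ratio invariant, so $\mathbf{M}^{k,m}_\delta$ should be compared to $M^k_{\delta/2^m}$ \emph{after} also rescaling space, and the space-rescaling Jacobian is exactly what makes the $m$-sum of the form $\sum_{m>\log_2\delta} (\delta\,2^{-m})^{\varepsilon'}$ for a positive power — which converges, and is $\lesssim \delta^{\frac{k-n}{2np}}$ after summation, since the dominant term is $m$ close to $\log_2\delta$. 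I would carry out this geometric summation explicitly, taking care that the constant that emerges depends only on $n,k,p$ (and, on reflection, one checks $p$-dependence can be absorbed since $p\in[1,\infty)$ enters only through fixed exponents). This dyadic bookkeeping, rather than any new geometric input, is where the real work lies; everything else is a change of variables plus the already-established Theorem \ref{thm:main-theorem}.
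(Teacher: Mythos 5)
Your overall plan — decompose dyadically in the radius, rescale each block to the restricted operator at a different width, and sum — is exactly the paper's strategy. The paper packages the rescaling step as Lemma~\ref{lem:norms-rescaled}, which is the clean identity $\norm{M^k_{\delta,t}}_{L^p\to L^p}=\norm{M^k_{2^{-t}\delta}}_{L^p\to L^p}$, and the proof of Theorem~\ref{thm:main-theorem-unrestrcited} then splits the integral over level sets $\Omega_t$ of $r$, applies that identity, and sums a convergent geometric series $\sum_{t\le 0}2^{t(n-k)/(2n)}$.

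Where your write-up goes wrong is in the change-of-variables bookkeeping, and the error then propagates into an entirely spurious ``main obstacle.'' With $g_m(x)=2^{-m}x$ one has $f\circ g_m^{-1}(y)=f(2^m y)$, and since $m\le1$ (typically negative) this is a \emph{dilation} of $f$ onto a larger region, so $\norm{f\circ g_m^{-1}}_{p}=2^{-mn/p}\norm{f}_p$, not $2^{mn/p}\norm{f}_p$. Combined with the $2^{mn/p}$ you correctly get from changing variables in $\int |M^k_{2^{-m}\delta}(f\circ g_m^{-1})(2^{-m}x)|^p\,dx$, the two Jacobian factors cancel exactly, giving $\norm{\mathbf{M}^{k,m}_\delta f}_p\le\norm{M^k_{2^{-m}\delta}}_{L^p\to L^p}\norm{f}_p$ — precisely Lemma~\ref{lem:norms-rescaled}. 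There is no $2^{2mn/p}$ prefactor. Plugging in Theorem~\ref{thm:main-theorem} and summing $\norm{\cdot}_p^p$ over $m$ gives $\delta^{(k-n)/(2n)}\sum_{m\le1}2^{m(n-k)/(2n)}$, a convergent geometric series (dominated by $m$ near $1$, not near $\log_2\delta$ as you assert), and the theorem follows.

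A secondary issue: even granting your erroneous $2^{2mn/p}$ factor, your claim that the sum ``blows up'' as $m\to-\infty$ is also wrong. Writing everything as a power of $2^{-m}$, the total exponent $\tfrac{k-n}{2np}-\tfrac{2n}{p}$ is negative, and $2^{-m}\to+\infty$, so those terms \emph{vanish}; there was never a divergence to fix. The entire paragraph headed ``The main obstacle, and its resolution'' is therefore chasing a phantom, and the resolution it offers (restricting to $m>\log_2\delta$ and invoking a ``width-to-side ratio invariance'') does not repair the arithmetic but merely reiterates the cutoff that is already built into the definition $r(x)\in(\delta,2]$. The underlying idea is sound and matches the paper, but as written the computation does not establish the bound; once the sign in $\norm{f\circ g_m^{-1}}_p$ is corrected, the proof becomes the paper's proof.
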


We begin by considering the case where the side lengths are between $2^t$ and $2^{t+1}$.
\begin{defn}
 Given $0<\delta<1$ and $f \in L^1_{loc}(\rr^n)$, define
\begin{equation*}
M^{k}_{\delta,t}f(x) = \sup\limits_{2^t \leq r \leq2^{t+1}} \min\limits_{j=1}^{N}  \displaystyle\frac{1}{\mathcal{L}(S_{k,\delta}^{j}(x,r))} \int _ {S_{k,\delta}^{j}(x,r)} |f(y)| dy,
\end{equation*}
where $ t \in \Z$ is fixed.
\end{defn}

\begin{lemma}\label{lem:norms-rescaled}
 \[
 \norm{M^k_{\delta,t}}_{L^p \rightarrow L^p} = \norm{M^k_{2^{-t}\delta}}_{L^p \rightarrow L^p}.
\]
\end{lemma}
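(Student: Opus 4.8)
The plan is to exhibit an explicit dilation that conjugates $M^k_{\delta,t}$ into $M^k_{2^{-t}\delta}$ and then track how the relevant $L^p$ norms transform. First I would fix the linear map $D_t:\rr^n\to\rr^n$, $D_t(x)=2^{-t}x$, and record the elementary scaling identity for skeletons: $S_k(2^{-t}x,2^{-t}r)=D_t\big(S_k(x,r)\big)$, and likewise, since $d$ is the sup-metric and $D_t$ scales all distances by $2^{-t}$, one has $S^j_{k,2^{-t}\delta}(2^{-t}x,2^{-t}r)=D_t\big(S^j_{k,\delta}(x,r)\big)$ for every face index $j$. Consequently $\mathcal{L}_n\big(S^j_{k,2^{-t}\delta}(2^{-t}x,2^{-t}r)\big)=2^{-tn}\,\mathcal{L}_n\big(S^j_{k,\delta}(x,r)\big)$.

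Next I would compute $M^k_{\delta,t}f(x)$ by the change of variables $y=D_t^{-1}(w)=2^t w$ inside each face average. Writing $g:=f\circ D_t^{-1}$ (so $g(w)=f(2^t w)$), the substitution turns $\frac{1}{\mathcal{L}_n(S^j_{k,\delta}(x,r))}\int_{S^j_{k,\delta}(x,r)}|f(y)|\,dy$ into $\frac{1}{\mathcal{L}_n(S^j_{k,2^{-t}\delta}(2^{-t}x,2^{-t}r))}\int_{S^j_{k,2^{-t}\delta}(2^{-t}x,2^{-t}r)}|g(w)|\,dw$, because the Jacobian factor $2^{tn}$ cancels against the ratio of the two normalizing Lebesgue measures. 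Taking the minimum over $j$ and then the supremum over $r\in[2^t,2^{t+1}]$ — which corresponds bijectively to $2^{-t}r\in[1,2]$ — gives the pointwise identity
\[
M^k_{\delta,t}f(x)=M^k_{2^{-t}\delta}g\big(2^{-t}x\big)=\big(M^k_{2^{-t}\delta}g\big)\circ D_t\,(x).
\]

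Finally I would convert this into the statement about operator norms. Since $D_t$ is a linear bijection with constant Jacobian, the map $f\mapsto f\circ D_t^{-1}$ is, up to the scalar $2^{-tn/p}$, an isometry of $L^p(\rr^n)$: $\|f\circ D_t^{-1}\|_{L^p}=2^{tn/p}\|f\|_{L^p}$, and similarly $\|h\circ D_t\|_{L^p}=2^{-tn/p}\|h\|_{L^p}$. Applying these with $h=M^k_{2^{-t}\delta}g$ and $g=f\circ D_t^{-1}$ yields $\|M^k_{\delta,t}f\|_{L^p}=2^{-tn/p}\|M^k_{2^{-t}\delta}g\|_{L^p}\le 2^{-tn/p}\|M^k_{2^{-t}\delta}\|_{L^p\to L^p}\|g\|_{L^p}=\|M^k_{2^{-t}\delta}\|_{L^p\to L^p}\|f\|_{L^p}$, and taking the supremum over $f\neq 0$ gives $\le$. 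The reverse inequality is symmetric: run the same argument with the roles of $M^k_{\delta,t}$ and $M^k_{2^{-t}\delta}$ interchanged (equivalently, replace $t$ by $-t$ and $\delta$ by $2^{-t}\delta$). Both bounds together give the claimed equality. There is no real obstacle here; the only point requiring a little care is bookkeeping the two scaling exponents — the $2^{-tn}$ from the Jacobian and the $2^{-tn}$ from the measure of the rescaled neighborhood — and checking that they cancel so that the averages, and not merely the unnormalized integrals, transform correctly.
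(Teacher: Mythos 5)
Your argument is correct and follows essentially the same route as the paper: introduce the dilation $D_t(x)=2^{-t}x$, observe that skeletons, their $\delta$-neighborhoods, and the normalizing Lebesgue measures all transform compatibly so that $M^k_{\delta,t}f(x)=M^k_{2^{-t}\delta}(f\circ D_t^{-1})(2^{-t}x)$, and then pass to operator norms using the fact that $f\mapsto f\circ D_t^{-1}$ is a bijection of $L^p$ scaling norms by a fixed constant. One small slip: your two scaling identities have the exponent sign reversed — in fact $\norm{f\circ D_t^{-1}}_{L^p}=2^{-tn/p}\norm{f}_{L^p}$ and $\norm{h\circ D_t}_{L^p}=2^{tn/p}\norm{h}_{L^p}$ — but since you apply them consistently with the same (wrong) sign, the two factors still cancel and your final equality of operator norms is correct.
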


\begin{proof}
Let $2^t \le r \le 2^{t+1}$. We use the usual shorthand $\fint_A f = \mathcal{L}_n(A)^{-1} \int_A f$.
 Let $f \in L^p(\rr^n)$ and let $g(\cdot)=f(2^t\cdot)$.  Changing variables one can check that
\[
\fint_{S^j_{\delta}(x,r)} |f(y)| dy =  \fint_{S^j_{{2^{-t}\delta}}(2^{-t}x,\tilde{r})} |f(2^{t}y)| \, dy.
\]
Therefore $M^{k}_{2^{-t}\delta} g(2^{-t}x)= M^{k}_{\delta,t} f(x)$ and hence
\[
\|M^{k}_{2^{-t}\delta} g\|_p/\|g\|_p = \|M^{k}_{\delta,t} f\|_p/\|f\|_p,
\]
giving the claim.
\end{proof}

\begin{rmk}
Lemma \ref{lem:norms-rescaled} together with Theorem \ref{thm:main-theorem} show that if we allow $k$-skeletons of cubes with arbitrarily large side lengths then the corresponding maximal operator cannot be bounded.

On the other hand, if we allow side lengths  smaller than $\delta$, then $S_{\delta,k}(x,r)$ becomes an $n$-cube with center $x$ and side length $\approx \delta$, so we are back to averaging over full cubes, similar to the classical Hardy-Littlewood maximal operator. For this reason in Definition \ref{def:unrestricted} we restrict ourselves to cubes of sides between $\delta$ and $2$.
 \end{rmk}

\begin{proof}[Proof of Theorem \ref{thm:main-theorem-unrestrcited}]
Fix a function $r: \rr^2 \rightarrow (\delta,2]$ and define the level sets

\[
\Omega_{t}:=\{ x \in \rr^n: 2^t \leq r(x) < 2^{t+1}\}.
\]
Letting $\log$ be the base $2$ logarithm, we have
\begin{align*}
\int_{\rr^n} (A^k_{r,\delta}f(x))^p \, dx &= \displaystyle \sum_{t=\lfloor \log(\delta)\rfloor }^0 \int_{\Omega_t} (A^k_{r,\delta}f(x))^p \, dx \\
\\&\leq \displaystyle \sum_{t=\lfloor \log \delta\rfloor }^0 \int_{\rr^n} (M^k_{\delta,t}f(x))^p \, dx
\\&\leq \displaystyle \sum_{t=\lfloor \log\delta\rfloor }^0 \norm{M^k_{2^{-t}\delta}f}^p_{p}
\\ &\leq \displaystyle C_{n,k}\delta^{\frac{k-n}{2n}} \norm{f}^{p}_{p}  \sum_{t=-\infty }^0    2^{t\tfrac{n-k}{2n}}\\
&= C_{n,k}\delta^{\frac{k-n}{2n}}\norm{f}^{p}_{p}.
\end{align*}
Since this holds for every function $r$, we obtain the desired result.
\end{proof}

\subsection{A geometric corollary}

The following corollary of Theorem \ref{thm:main-theorem-unrestrcited} recovers the special case of \cite[Theorem 1.1(2)]{Thornton17} in which $\bdim(S)=n$.
 \begin{cor} For any $0 \leq k < n$ and bounded sets $A, B \subseteq \rr^n$ such that $\bdim(A)=n$ and $B$ contains the $k$-skeleton of a $n$-cube around every point of  $A$,
 \[
 \lbdim(B) \geq  n - \frac{1}{2}+\frac{k}{2n}.
 \]
 \end{cor}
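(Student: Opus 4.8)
The plan is to combine the maximal inequality of Theorem~\ref{thm:main-theorem-unrestrcited}, applied to the indicator of a small neighbourhood of $B$, with elementary box counting. The main point of departure from the lower bound in Proposition~\ref{prop:lowerbound} is that $\bdim(A)=n$ does not imply $\mathcal{L}_n(A)>0$, so one cannot pigeonhole a single favourable scale for the side lengths as was done there; instead one keeps all side lengths and observes that the skeletons of side length $\lesssim\delta$ only produce centers that already lie in a $\delta$-neighbourhood of $B$, hence are harmless.

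Here is how I would organize the steps. First reduce to the situation where, for each $x\in A$, a side length $r_x\in(0,2]$ is fixed with $S_k(x,r_x)\subseteq B$: since $A,B$ are bounded and the vertices of any such cube belong to $S_k(x,r_x)\subseteq B$, the numbers $r_x$ are uniformly bounded, and after a linear rescaling (which changes neither $\bdim$ nor $\lbdim$, cf.\ \cite[Ch.~3.1]{Falconer03}) we may assume $r_x\le 2$ for all $x$. Now fix $\delta\in(0,1)$ and split $A=A^{>2\delta}\cup A^{\le 2\delta}$ according to whether $r_x>2\delta$ or $r_x\le 2\delta$. For the first piece: if $d(x,A^{>2\delta})<\delta$, pick $x'\in A^{>2\delta}$ with $d(x,x')<\delta$; then $S_k(x,r_{x'})$ is a translate of $S_k(x',r_{x'})\subseteq B$ by a vector of infinity-norm $<\delta$, so $S_{k,2\delta}(x,r_{x'})\subseteq B_{3\delta}$, and since $r_{x'}\in(2\delta,2]$ this forces $\mathbf{M}^k_{2\delta}\1_{B_{3\delta}}(x)\ge 1$. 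For the second piece: for $x\in A^{\le 2\delta}$ the vertex $x+(r_x,\dots,r_x)$ lies in $S_k(x,r_x)\subseteq B$ at infinity-distance $r_x\le 2\delta$ from $x$, so $A^{\le 2\delta}\subseteq B_{2\delta}$ and therefore $(A^{\le 2\delta})_\delta\subseteq B_{3\delta}$.

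Combining these observations with Theorem~\ref{thm:main-theorem-unrestrcited} (applied to $f=\1_{B_{3\delta}}$ with width $2\delta$) gives
\[
\mathcal{L}_n(A_\delta)\ \le\ \mathcal{L}_n\big((A^{>2\delta})_\delta\big)+\mathcal{L}_n\big((A^{\le 2\delta})_\delta\big)\ \le\ \big(C_{n,k}^{\,p}(2\delta)^{\frac{k-n}{2n}}+1\big)\,\mathcal{L}_n(B_{3\delta})\ \le\ C\,\delta^{-\frac{n-k}{2n}}\,\mathcal{L}_n(B_{3\delta}),
\]
with $C=C(n,k,p)$. On the other hand $\lbdim(A)=n$ gives, for any $\eps>0$ and all small enough $\delta$, the bound $N_\delta(A)\ge\delta^{-(n-\eps/2)}$, hence $\mathcal{L}_n(A_\delta)\gtrsim_n\delta^nN_\delta(A)\ge\delta^{\eps}$. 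Feeding this into the display yields $\mathcal{L}_n(B_{3\delta})\gtrsim_{n,k,p,\eps}\delta^{\frac{n-k}{2n}+\eps}$, and then $N_{3\delta}(B)\gtrsim_n(3\delta)^{-n}\mathcal{L}_n(B_{3\delta})\gtrsim\delta^{-n+\frac{n-k}{2n}+\eps}$. Since $n-\frac{n-k}{2n}=n-\tfrac12+\tfrac{k}{2n}=:\gamma$, this says $N_\eta(B)\gtrsim_{n,k,p,\eps}\eta^{-(\gamma-\eps)}$ for all small $\eta$, whence $\lbdim(B)\ge\gamma-\eps$; letting $\eps\downarrow 0$ gives the claim.

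The step requiring most care is the bookkeeping of neighbourhood radii — keeping track that $S_k(x',r_{x'})\subseteq B$ upgrades only to $S_{k,2\delta}(x,r_{x'})\subseteq B_{3\delta}$, so that the maximal operator must be tested against $\1_{B_{3\delta}}$ rather than $\1_{B_\delta}$ — together with the conceptual point that the decomposition $A=A^{>2\delta}\cup A^{\le 2\delta}$ has to depend on $\delta$: it is precisely the trivial inclusion $A^{\le 2\delta}\subseteq B_{2\delta}$ that lets one dispense with any positive-measure hypothesis on $A$ and renders the failure of countable stability of box dimension irrelevant.
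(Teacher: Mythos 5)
Your proof is correct and follows the same overall strategy as the paper: apply the unrestricted maximal inequality of Theorem~\ref{thm:main-theorem-unrestrcited} to the indicator of a small neighbourhood of $B$, deduce $\mathcal{L}_n(A_\delta)\lesssim \delta^{(k-n)/(2n)}\mathcal{L}_n(B_{C\delta})$, and convert this to a lower bound on $\lbdim(B)$ via volume-based box counting. The one place you genuinely add something is the $\delta$-dependent decomposition $A=A^{>2\delta}\cup A^{\le 2\delta}$. The paper simply asserts that for each $x\in A_\delta$ there is $r\in[\delta,2]$ with $S_{k,\delta}(x,r)\subset B_{2\delta}$, but when the nearest center $x'\in A$ to $x$ has side length $r_{x'}\ll\delta$ this fails with the stated constant (the far faces of $S_{k,\delta}(x,\delta)$ sit at infinity-distance roughly $3\delta$ from the tiny skeleton $S_k(x',r_{x'})$); one must either enlarge the constant in $B_{C\delta}$ or, as you do, observe that such $x$ are already trivially in $B_{O(\delta)}$ and split them off. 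Your handling is the cleaner of the two fixes and correctly tracks the constants $2\delta$ and $3\delta$ throughout. A couple of cosmetic remarks: since any $p\ge 1$ works after raising the maximal inequality to the $p$-th power, you might as well take $p=1$ as the paper does and drop the $p$ from the constants; and your line $\delta^n N_\delta(A)\ge\delta^\eps$ should read $\delta^{\eps/2}$, though this does not affect the argument.
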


 \begin{proof} Since $B$ is bounded, by rescaling we may assume that all the cubes have side lengths $\le 2$. Fix a small $\delta>0$. Note that for each $x\in A_\delta$ there is $r\in [\delta,2]$ such that $S_{k,\delta}(x,r) \subset B_{2\delta}$. It follows that
 \[
 \mathbf{M}^{k}_{\delta}\mathbf{1}_{B_{2\delta}}(x) =1 \quad\text{for all }x\in A_\delta.
  \]
  Then, by Theorem \ref{thm:main-theorem-unrestrcited} applied with $p=1$,
  \[
  \mathcal{L}_n(A_\delta) \le C_{n,k} \delta^{\tfrac{k-n}{2n}} \mathcal{L}_n(B_{2\delta}).
  \]
  The corollary now follows from the definition of box dimension in terms of volumes of neighborhoods, see \cite[Proposition 3.2]{Falconer03}.
  \end{proof}

\begin{rmk}
The proof yields a lower bound for $\bdim(B)$ in terms of $\bdim(A)$ for any values of the latter, but these bounds are worse than the sharp ones obtained in \cite{KNS18, Thornton17} unless $\bdim(A)=n$.
\end{rmk}


\end{document}